\documentclass[12pt]{article}
\usepackage[utf8]{inputenc}
\usepackage{amsmath}
\usepackage[english]{babel}
\usepackage{url}

\usepackage[a4paper,top=3cm,bottom=2cm,left=3cm,right=3cm,marginparwidth=1.75cm]{geometry}

\usepackage{amsmath}
\usepackage{graphicx}
\usepackage[colorinlistoftodos]{todonotes}
\usepackage[colorlinks=true, allcolors=blue]{hyperref}
\usepackage{amssymb}
\usepackage{amssymb}
\usepackage{amsthm}
\usepackage{authblk}
\usepackage{amsmath}
\usepackage{amsfonts}
\usepackage{amssymb}
\usepackage{amscd}
\usepackage{latexsym}
\usepackage{breqn}
\usepackage{cite}
\usepackage{mathtools}
\mathtoolsset{showonlyrefs}

\theoremstyle{plain}
\newtheorem{thm}{Theorem}[section]
\newtheorem{lem}[thm]{Lemma}
\newtheorem{prop}[thm]{Proposition}
\newtheorem{cor}[thm]{Corollary}

\theoremstyle{definition}

\newtheorem{conj}[thm]{Conjecture}
\newtheorem{question}[thm]{Question}

\theoremstyle{remark}

\title{Semialgebraic Dimension and Truncated Toeplitz Models for Complex Symmetric Matrices}
\author{Ryan O'Loughlin \\ E-mail address: \href{mailto:r.d.oloughlin@reading.ac.uk}{r.d.oloughlin@reading.ac.uk}}
\affil{Department of Mathematics and Statistics, University of Reading, 
Reading, United Kingdom, RG6 6AX}
\date{}
\begin{document}

\maketitle
\begin{abstract}
We answer negatively a finite-dimensional unitary-model question for complex symmetric operators. More precisely, we show that, for every \(n\geq 10\), not every
\(n\times n\) symmetric matrix is unitarily equivalent to a direct sum of truncated
Toeplitz operators. In order to do this, we first use semialgebraic dimension, a tool from real algebraic geometry, to prove
a general theorem showing that, if \(\mathcal X\) is a semialgebraic family of
complex symmetric matrices, then the set of complex symmetric matrices which are unitarily
equivalent to an element of \(\mathcal X\) is semialgebraic and has dimension at most $\dim_{\mathbb R}\mathcal X+\frac{n(n-1)}2.$ We then apply this theorem to show that when $n\geq 10$ there exist irreducible symmetric $n \times n$ matrices which are not unitarily equivalent to a truncated Toeplitz operator. Although unitary equivalence is too restrictive, we prove that every
finite-dimensional complex symmetric operator is complex-orthogonally
equivalent to a coanalytic truncated Toeplitz operator. We also answer
positively a refined representation question by showing that whenever a
symmetric matrix is unitarily equivalent to a truncated Toeplitz operator,
it is a matrix representation of that operator with respect to a
conjugation-invariant orthonormal basis.

 \vskip 0.5cm
\noindent Keywords: complex symmetric operator, truncated Toeplitz operator, Toeplitz matrix, model space.
 \vskip 0.5cm
\noindent MSC: 30H10, 47B35, 15B05, 46E20.
\end{abstract}

\section{Introduction}

Since the seminal work of Sarason on truncated Toeplitz operators \cite{sarason2007algebraic}, there has been a substantial amount of interest in the structure and applications of truncated Toeplitz operators. In parallel with this, there has also been a growing body of work on complex symmetric operators. These two classes of operators are closely connected, since every truncated Toeplitz operator is complex symmetric, and truncated Toeplitz operators have often served as prototypical examples in the theory of complex symmetric operators.

There is an emerging body of evidence to suggest that truncated Toeplitz operators may serve as model operators for complex symmetric operators. In the open-problems paper \cite{garciaopen}, the following question was posed as one of three highlighted questions in the theory of complex symmetric operators.

\begin{question}\label{Q}
Just as multiplication operators $M_z : L^2(X, \mu ) \to L^2(X, \mu)$ play
a fundamental role in decomposing normal operators, can one develop a comparable model theory for complex symmetric operators? There is some indication that truncated Toeplitz operators may play an important role in the resolution of this problem.
\end{question}

Decomposing normal operators, namely the Spectral Theorem, is a famous cornerstone result of Operator Theory. A positive answer to the above question would therefore represent a significant step towards a model theory for a large class of non-normal operators. However, this question is not yet fully understood even in finite dimensions. In finite dimensions, complex symmetric operators are precisely those operators which are unitarily equivalent to a symmetric matrix. In this regard, the following question from \cite{recentprogressGarcia} has received particular attention.

\begin{question}\label{conj}
    Every symmetric matrix is unitarily equivalent to a direct sum of truncated Toeplitz operators.
\end{question}

The question above is known to be true for normal matrices \cite{recentprogressGarcia}, $2\times 2$, matrices \cite{recentprogressGarcia}, $3\times 3$ matrices \cite{garciaUETTOanalyticsymbols}, rank one matrices \cite{garciaUETTOanalyticsymbols}, nilpotent matrices of order 2 \cite{garcia2014two} and for certain tensor products of truncated Toeplitz operators \cite{UETTOstrouse}. Related structural results appear in \cite{TTOUEsimilarity}, where the authors
study spatial isomorphism, unitary equivalence and similarity for truncated Toeplitz
operators. It was also shown in \cite{garcia2012unitary} that every finite complex symmetric
	matrix is unitarily equivalent to a direct sum of (i) irreducible complex symmetric matrices
	or (ii) matrices of the form $A \oplus A^T$ where $A$ is irreducible
	and not unitarily equivalent to a complex symmetric matrix.

The principal aim of this paper is to resolve Question \ref{conj} in the negative. More precisely, for every \(n\geq 10\), we show not every
$n \times n$ symmetric matrix is unitarily equivalent to a direct sum of truncated
Toeplitz operators. A central novelty of the proof is that it brings tools from real algebraic geometry
into this part of operator theory. Using semialgebraic dimension, we prove a general
dimension-counting result for unitary equivalence classes inside the space of complex symmetric matrices.

After establishing this negative result, we turn to a related constructive question
from earlier work. Question \ref{conj} asks whether every symmetric matrix is
unitarily equivalent to a direct sum of truncated Toeplitz operators, and is therefore an existence question.
By contrast, one may also ask whether a unitary equivalence which already exists
between a symmetric matrix and a truncated Toeplitz operator can be realised through a natural choice of
basis on the model space. Following the negative answer in \cite{o2023symmetric} to a question
posed in \cite{recentprogressGarcia}, a refined version of this problem was proposed,
asking whether every unitary equivalence between a complex symmetric matrix and a
truncated Toeplitz operator can be realised by a matrix representation with respect
to a conjugation-invariant orthonormal basis. We show that this refined question has a positive answer in finite dimensions.

The layout of the paper is as follows. In Section \ref{2} we introduce the basic material
on semialgebraic sets and semialgebraic dimension which will be used throughout
the paper. In Section \ref{3} we prove a general dimension-counting theorem for unitary
equivalence classes inside the space of complex symmetric matrices. In Section \ref{4}
we apply this theorem to finite-dimensional truncated Toeplitz operators and show
that Question \ref{conj} has a negative answer for \(n\geq 10\). In Section \ref{new5} we introduce complex-orthogonal equivalence and prove that
every finite-dimensional complex symmetric operator has a coanalytic
truncated Toeplitz model under this equivalence.  Finally, in
Section \ref{5} we prove every unitary equivalence between a complex symmetric matrix and a
truncated Toeplitz operator can be realised by a matrix representation with respect
to a conjugation-invariant orthonormal basis.

\section{Semialgebraic preliminaries}\label{2}

In this section we introduce the basic material on semialgebraic sets needed in the sequel. Our aim is only to introduce the definitions and dimension facts used in the proof, and to make the argument accessible to readers who are not specialists in real algebraic geometry. All of the material recalled below is standard; for a more detailed treatment of semialgebraic sets, semialgebraic dimension, and the results used here, we refer the reader to \cite{bochnak2013real}.

Although many of the conditions which throughout the paper are algebraic, it is essential
for us to work in the context of semialgebraic sets rather than only with algebraic
varieties. The sets we need involve inequalities, such as the distinctness of points
on the unit circle, the non-vanishing of certain matrix entries, and the
non-triviality of reducing projections. We also pass repeatedly to complements and projections. Semialgebraic sets are stable under these operations, by the Tarski--Seidenberg theorem, and they still carry a robust dimension theory. This is
what makes semialgebraic dimension the natural tool for the argument.

We denote the set of $n \times n$ symmetric matrices with entries in $\mathbb{C}$ by $\mathcal{CS}_n$ and the set of $ n \times n $ real orthogonal (i.e. real unitary) matrices by $O(n,\mathbb R)$. Throughout the paper, we identify \(M_n(\mathbb C)\) with \(\mathbb R^{2n^2}\) by taking real and imaginary parts of the matrix entries. Similarly \(\mathcal{CS}_n\) is identified with \(\mathbb R^{n(n+1)}\).

A subset \(X\subseteq\mathbb R^N\) is called semialgebraic if it can be written as a finite union of sets of the form
\[
\{x\in\mathbb R^N:p_1(x)=0,\ldots,p_r(x)=0,\ q_1(x)>0,\ldots,q_s(x)>0\},
\]
where \(p_i,q_j\in\mathbb R[x_1,\ldots,x_N]\). Equivalently, semialgebraic sets are precisely the finite Boolean combinations of polynomial equalities and inequalities. In particular, complements of semialgebraic sets are semialgebraic.

If \(X\subseteq\mathbb R^N\) and \(Y\subseteq\mathbb R^M\) are semialgebraic sets, a map \(f:X\to Y\) is called semialgebraic if its graph $$\Gamma_f=\{(x,y)\in X\times Y:y=f(x)\}
$$
is a semialgebraic subset of \(\mathbb R^{N+M}\).  Polynomial maps are semialgebraic, and rational maps are semialgebraic on the subset where their denominators are non-zero.

We shall use the standard notion of real semialgebraic dimension. By the semialgebraic stratification theorem, every semialgebraic set \(X\subseteq\mathbb R^N\) admits a finite disjoint decomposition $$
X=X_1\sqcup\cdots\sqcup X_k ,
$$
where each \(X_j\) is a smooth manifold and itself a semialgebraic subset of $\mathbb{R}^N$. The real dimension of \(X\) is then defined by \(\dim_{\mathbb R}X=\max_{1\leq j\leq k}\dim_{\mathbb R}X_j\). This number is independent of the chosen stratification. We note that when $X$ is a $\mathbb{R}$-vector space, the vector space dimension agrees with the real dimension of \(X\).

We shall use the following standard facts about semialgebraic dimension.

% Although the following lemma contains standard material from real algebraic geometry, we include it to make the argument more accessible to readers coming from complex symmetric operators and truncated Toeplitz operators communities.

\begin{lem}\label{facts}
Let \(X,Y\) be semialgebraic sets.
\begin{enumerate}
\item If \(X\subseteq Y\), then \(\dim_{\mathbb R}X\leq \dim_{\mathbb R}Y\).
\item If \(X=X_1\cup\cdots\cup X_k\), then \(\dim_{\mathbb R}X=\max_j\dim_{\mathbb R}X_j\).
\item If \(f:X\to Y\) is semialgebraic, then \(\dim_{\mathbb R}f(X)\leq \dim_{\mathbb R}X\).  In particular, if two semialgebraic sets are related by a real linear isomorphism, then they have the same real semialgebraic dimension.
\item If \(X\subseteq\mathbb R^a\times\mathbb R^b\) is semialgebraic, then its projection onto either coordinate factor is semialgebraic and has dimension at most \(\dim_{\mathbb R}X\).
\item If \(X\subseteq Y\times\mathbb R^m\) is semialgebraic and every fibre \(X_y=\{z:(y,z)\in X\}\) has dimension at most \(d\), then \(\dim_{\mathbb R}X\leq \dim_{\mathbb R}Y+d\).
\item If \(X\) and \(Y\) are semialgebraic sets, then $X\times Y$ is semialgebraic and 
\[
\dim_{\mathbb R}(X\times Y)
=
\dim_{\mathbb R}X+\dim_{\mathbb R}Y.
\]
\end{enumerate}
\end{lem}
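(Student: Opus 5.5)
These are standard facts, so the plan is to reduce each item to the structure theorems for semialgebraic sets in \cite{bochnak2013real}. The two main tools are the stratification theorem stated above and Hardt's semialgebraic triviality theorem. Throughout, I will use the characterisation of dimension as the maximal $d$ such that $X$ contains a semialgebraic subset semialgebraically homeomorphic to an open cube $(0,1)^d$; equivalently, the maximal dimension of a stratum. Both characterisations follow from the stratification theorem, and both are independent of the chosen decomposition.

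\emph{Items 1 and 2.} For (1), refine a stratification of $Y$ so that it is compatible with $X$, which the stratification theorem allows since $X$ is semialgebraic. Then $X$ is a union of strata of $Y$, and (1) follows at once. For (2), stratify $X$ compatibly with every $X_j$, so that each stratum of $X$ lies in some $X_j$. This gives $\dim X\le\max_j\dim X_j$, and the reverse inequality is (1).

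\emph{Items 4, 6 and 3.} For (4), semialgebraicity of the projection is the Tarski--Seidenberg theorem. For the dimension bound, apply Hardt triviality to the projection $\pi|_X$: it partitions $\pi(X)$ into finitely many semialgebraic pieces $T_i$ over which $X$ is semialgebraically homeomorphic to $T_i\times F_i$ with $F_i\neq\emptyset$. Then $\dim T_i\le\dim(T_i\times F_i)\le\dim X$, and we conclude with (2). For (6), $X\times Y$ is defined by the conjunction of the defining formulas, so it is semialgebraic. Taking products of strata $X_i\times Y_j$ gives a stratification by smooth manifolds of dimension $\dim X_i+\dim Y_j$, and maximising yields the formula. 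For (3), the image $f(X)$ is the projection of the graph $\Gamma_f$, so (4) gives $\dim f(X)\le\dim\Gamma_f$. It remains to show $\dim\Gamma_f=\dim X$. Projection to $X$ gives a semialgebraic bijection $\Gamma_f\to X$, and fibres that are points contribute nothing in the triviality argument, so both inequalities follow from the same Hardt decomposition. The ``in particular'' clause then follows by applying (3) to the isomorphism and to its inverse.

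\emph{Item 5.} This is the step I expect to require the most care. Apply Hardt triviality to the projection $X\to\mathbb R^{a}$, where $Y\subseteq\mathbb R^a$. This gives a finite partition $\pi(X)=\bigsqcup T_i$ with $X\cap\pi^{-1}(T_i)\cong T_i\times F_i$, where $F_i$ is semialgebraically homeomorphic to a fibre $X_y$ and hence has $\dim F_i\le d$. Since semialgebraic homeomorphisms preserve dimension by (3), item (6) gives $\dim(X\cap\pi^{-1}T_i)=\dim T_i+\dim F_i\le\dim Y+d$, using (1) for $T_i\subseteq Y$. Finally, (2) over the finitely many $i$ finishes the proof. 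The only subtle point is the use of Hardt's theorem, which I would simply quote from \cite{bochnak2013real} rather than reprove.
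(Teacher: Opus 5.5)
Your proposal is correct, but it takes a different route from the one the paper indicates. The paper gives no proof. It cites Chapter 2 of \cite{bochnak2013real} and calls the facts consequences of stratification and the rank theorem: one stratifies so that the relevant map is smooth of constant rank on each stratum, then reads off the dimensions of images and fibres from the local normal form. You instead use Hardt's triviality theorem as the single black box. The product structure $X\cap\pi^{-1}(T_i)\cong T_i\times F_i$ reduces (3), (4) and (5) to (6) plus invariance of dimension under semialgebraic homeomorphisms, and it yields the fibre bound in (5) with no extra work. In (3) you handle a possibly discontinuous $f$ by trivialising the continuous projection $\Gamma_f\to X$, which is the right way to meet Hardt's continuity hypothesis. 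The paper's route buys economy: Hardt's theorem (Chapter 9 of \cite{bochnak2013real}) is much deeper than the statements in the lemma. As far as I recall, its standard proofs already use the elementary dimension theory of cells, typically via induction on the dimension of the base. So your argument is a tidy reorganisation rather than a more elementary derivation.

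There is one circularity in the write-up to fix. In (4) you already use that semialgebraic homeomorphisms preserve dimension, to compare $X\cap\pi^{-1}(T_i)$ with $T_i\times F_i$. In (5) you attribute this fact to (3), but the proof of (3) passes through (4). The fix is easy, because your open-cube characterisation makes invariance immediate. A semialgebraic homeomorphism sends a semialgebraic copy of $(0,1)^d$ to a set that is semialgebraic (by Tarski--Seidenberg) and is again a copy of $(0,1)^d$. State this at the outset and the chain (6), (4), (3), (5) is sound. Note also that the equivalence of your two characterisations of dimension is where the real topological content lies. It is the same fact the paper assumes when it says the stratification dimension is independent of the chosen stratification.
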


These are standard consequences of semialgebraic stratification and the rank theorem. See, for example, Bochnak--Coste--Roy \cite[Chapter 2]{bochnak2013real} for further details. The assertion in 4 that projections of semialgebraic sets are semialgebraic is the Tarski--Seidenberg theorem.

% If a semialgebraic set is contained in the image of a semialgebraic map from a subset of \(\mathbb R^d\), then its real dimension is at most \(d\). This recovers the intuitive understanding that the number of parameters bounds the dimension. Consequently, the previous lemma shows that if a semialgebraic set is contained in a finite union of images of semialgebraic maps from subsets of \(\mathbb R^d\), then its real dimension is at most \(d\).

% By part (iii) of the previous lemma, if a semialgebraic set is contained in the image of a semialgebraic map from a subset of \(\mathbb R^d\), then its real dimension is at most \(d\). Thus the number of real parameters bounds the dimension. By part (ii) of the previous lemma, the same conclusion holds for a finite union of such images.

We will also need the following observation later.

\begin{lem}\label{lem22}
Let \(V\) be a finite-dimensional real vector space and let \(U\subseteq V\) be a non-empty open semialgebraic subset. Then \(\dim_{\mathbb R}U=\dim_{\mathbb R}V\).
\end{lem}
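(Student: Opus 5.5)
The plan is to trap the dimension of \(U\) between two bounds. The upper bound comes from monotonicity, and the lower bound comes from a small open cube inside \(U\), which we identify with an open box in \(\mathbb R^N\).

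First, fix a real linear isomorphism \(\phi:V\to\mathbb R^N\), where \(N=\dim V\). This lets us treat \(V\) as \(\mathbb R^N\), and by part 3 of Lemma \ref{facts} the transfer preserves semialgebraic dimension. Since \(U\subseteq V\), part 1 of Lemma \ref{facts} gives \(\dim_{\mathbb R}U\leq\dim_{\mathbb R}V=N\). Here we use the remark after the definition that the dimension of a vector space agrees with its real dimension.

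For the reverse inequality, pick a point \(u\in U\). Because \(U\) is open, it contains a small open box
\[
B=\prod_{i=1}^N(a_i,b_i),\qquad a_i<b_i ,
\]
around \(u\). This box is semialgebraic, since it is cut out by the polynomial inequalities \(x_i-a_i>0\) and \(b_i-x_i>0\). It is also a smooth \(N\)-dimensional manifold, namely an open subset of \(\mathbb R^N\).

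To see that \(\dim_{\mathbb R}B=N\), we could argue in either of two ways.
\begin{itemize}
\item One option is to use part 6 of Lemma \ref{facts}, writing \(B\) as a product of \(N\) open intervals. Each interval has dimension \(1\), because it is itself a one-dimensional manifold: any stratification of an interval into finitely many semialgebraic manifolds must contain a stratum of dimension \(1\), since a finite set of points cannot cover it.
\item The other option is to appeal directly to the independence of dimension from the choice of stratification. The trivial stratification \(\{B\}\) gives dimension \(N\).
\end{itemize}
Part 1 of Lemma \ref{facts} then yields \(N=\dim_{\mathbb R}B\leq\dim_{\mathbb R}U\), and the two inequalities together give equality.

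The only step needing real care is the claim \(\dim_{\mathbb R}B=N\). It rests on the fact that the stratification-based dimension is well defined and coincides with the manifold dimension for a semialgebraic set that is already a smooth manifold. This is implicit in the definition recalled above, which cites \cite{bochnak2013real}. I would simply invoke it, so no genuine obstacle is expected.
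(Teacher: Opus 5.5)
Your proposal is correct and follows essentially the same route as the paper: the upper bound comes from \(U\subseteq V\) and monotonicity, and the lower bound comes from an open ball inside \(U\) (your open box), which after choosing a basis is a smooth \(N\)-dimensional semialgebraic manifold. Your second justification, via the trivial stratification, is exactly the paper's argument, while the product-of-intervals argument via Lemma \ref{facts}(6) is a valid but unnecessary extra.
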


\begin{proof}
Since \(U\) is non-empty and open, it contains an open ball in \(V\). After choosing a basis for \(V\), this open ball is identified with an open ball in \(\mathbb R^{\dim_{\mathbb R}V}\), which is a smooth semialgebraic manifold of dimension \(\dim_{\mathbb R}V\). Hence \(\dim_{\mathbb R}U\geq \dim_{\mathbb R}V\). The reverse inequality follows immediately, since \(U\subseteq V\). 
\end{proof}

\section{Dimension counting for symmetric unitary equivalence classes of complex symmetric matrices}\label{3}

Let \(\mathcal X\subseteq \mathcal{CS}_n\). We define the symmetric unitary equivalence class of \(\mathcal X\) by
\[
\mathcal U(\mathcal X)
=
\{S\in\mathcal{CS}_n:S\text{ is unitarily equivalent to some }M\in\mathcal X\}.
\]

\noindent Observe that the symmetric unitary equivalence class is the intersection of the full unitary equivalence class with the set of symmetric matrices.

\begin{thm}\label{mainfinal}
Let \(\mathcal X\subseteq\mathcal{CS}_n\) be semialgebraic. Then \(\mathcal U(\mathcal X)\) is semialgebraic and
\[
\dim_{\mathbb R}\mathcal U(\mathcal X)
\leq
\dim_{\mathbb R}\mathcal X+\frac{n(n-1)}2.
\]
\end{thm}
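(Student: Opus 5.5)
The plan is to reduce unitary equivalence between complex symmetric matrices to equivalence by a \emph{real orthogonal} matrix. Once that is done, $\mathcal U(\mathcal X)$ becomes the image of the semialgebraic set $O(n,\mathbb R)\times\mathcal X$ under a polynomial map, and Lemma \ref{facts} finishes the argument. The dimension $n(n-1)/2$ in the bound is exactly $\dim_{\mathbb R}O(n,\mathbb R)$, which suggests this route.

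\textbf{Step 1 (reduction to real orthogonal equivalence).} Suppose $S,M\in\mathcal{CS}_n$ and $S=UMU^*$ with $U$ unitary.
\begin{itemize}
\item Transposing and using $S^T=S$, $M^T=M$ gives $\bar U M U^T=UMU^*$.
\item Multiply on the left by $U^T$ and on the right by $U$. Since $U^T\bar U=\overline{U^*U}=I$, this yields $MW=WM$ with $W:=U^TU$.
\item $W$ is unitary and symmetric, so it is normal with finite spectrum on the unit circle.
\item Choose a polynomial $p$ that interpolates a unimodular square root at each eigenvalue of $W$, and set $V=p(W)$.
\item Then $V^2=W$. Also $V$ is symmetric, since it is a polynomial in a symmetric matrix, and unitary, since $W$ is normal with unimodular $p$-values on its spectrum. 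Finally $V$ commutes with $M$, since $W$ does.
\item Put $Q=UV^{-1}$. It is unitary and $Q^TQ=V^{-1}U^TUV^{-1}=V^{-1}WV^{-1}=I$.
\item A matrix that is both unitary and complex orthogonal satisfies $\bar Q=Q$, so $Q\in O(n,\mathbb R)$.
\item Moreover $QMQ^T=QMQ^*=UV^{-1}MVU^*=UMU^*=S$.
\end{itemize}
Hence $S\in\mathcal U(\mathcal X)$ if and only if $S=QMQ^T$ for some $Q\in O(n,\mathbb R)$ and $M\in\mathcal X$. The converse direction is trivial, because $QMQ^T$ is symmetric and unitarily equivalent to $M$.

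\textbf{Step 2 (semialgebraicity and dimension).} $O(n,\mathbb R)$ is a real algebraic set, cut out by $Q^TQ=I$, and it is a smooth manifold of dimension $n(n-1)/2$. By part 6 of Lemma \ref{facts}, $O(n,\mathbb R)\times\mathcal X$ is semialgebraic of dimension $\dim_{\mathbb R}\mathcal X+n(n-1)/2$. The map $\Phi(Q,M)=QMQ^T$ is polynomial in the real coordinates, hence semialgebraic. Its image is $\mathcal U(\mathcal X)$ by Step 1. This image is the projection of the graph of $\Phi$, so it is semialgebraic by Tarski--Seidenberg (part 4 of Lemma \ref{facts}). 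Part 3 of Lemma \ref{facts} then gives
\[
\dim_{\mathbb R}\mathcal U(\mathcal X)\le \dim_{\mathbb R}\mathcal X+\tfrac{n(n-1)}2 .
\]

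\textbf{Main obstacle.} The only delicate point is Step 1. There one needs a square root $V$ of $W$ that is simultaneously symmetric, unitary and commuting with $M$. Taking $V$ to be an interpolating polynomial in $W$ secures all three properties at once, and that is the approach I would use.
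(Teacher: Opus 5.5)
Your proposal is correct and follows essentially the same route as the paper. The paper also reduces to real orthogonal equivalence (its Lemma 3.2, using a symmetric unitary interpolating square root of $VV^T$ that commutes with $M$), and then applies the polynomial map $(M,R)\mapsto R^TMR$ on $\mathcal X\times O(n,\mathbb R)$ together with Lemma \ref{facts}(3), (4) and (6); the only differences are notational, since you write $S=UMU^*$ and work with $U^TU$ in place of $VV^T$.
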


We first prove the lemma which allows us to replace unitary equivalence by real orthogonal equivalence.

\begin{lem}\label{3.2}
Let \(M\in\mathcal{CS}_n\). Then
\[
\{S\in\mathcal{CS}_n:S\text{ is unitarily equivalent to }M\}
=
\{R^TMR:R\in O(n,\mathbb R)\}.
\]
\end{lem}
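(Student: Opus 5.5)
The inclusion $\supseteq$ is immediate. If $R\in O(n,\mathbb R)$, then $R$ is unitary, so $R^TMR=R^*MR$ is unitarily equivalent to $M$. It is also symmetric, since $(R^TMR)^T=R^TM^TR=R^TMR$. The content of the lemma is therefore the inclusion $\subseteq$. For this I would use the classical fact that two complex symmetric matrices which are unitarily equivalent are in fact orthogonally equivalent via a real orthogonal matrix (Garcia--Tener style; alternatively derive it directly).

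Suppose $S,M\in\mathcal{CS}_n$ and $S=U^*MU$ with $U$ unitary. Transposing and using symmetry gives $S=S^T=U^TM^T\overline U=U^TM\overline U$. Comparing the two expressions,
\[
U^*MU=U^TM\overline U \quad\Longrightarrow\quad M\,(U\overline{U}^{\,*})...
\]
More cleanly, set $W=UU^T$, which is a unitary and symmetric matrix. The identity $U^*MU=U^TM\overline U$ rearranges, after multiplying on the left by $U$ and on the right by $U^T$, to
\[
MW=W\overline{W}\,^{*}\!\cdots
\]
I would carefully redo this algebra. I expect it to yield $MW=W\overline M$, or equivalently that $W$ intertwines $M$ and $M^*$ in the form $M\,UU^T=UU^T\,\overline{M}$. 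Taking adjoints gives $M^*W=W M^T$ type relations, so $W$ commutes with the real-linear structure involving $M$ and $M^*$ (i.e.\ with $M$ composed with conjugation).

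The key step is then the following. Since $W$ is symmetric unitary, the Takagi/Autonne factorization gives $W=V D V^T$ with $V$ real orthogonal and $D$ diagonal unitary. Equivalently, $W=e^{iH}$ with $H$ real symmetric, because $W=A+iB$ with $A,B$ real symmetric and commuting. One can then take a square root $Q=e^{iH/2}$. This $Q$ is a polynomial in $W$, it is symmetric unitary, and $Q^2=W$. Because $Q$ is a polynomial in $W$ and $\overline Q$ is the same polynomial (with conjugated coefficients) in $\overline W$, the intertwining relation for $W$ passes to $Q$, giving $MQ=Q\overline M$ after checking the spectral compatibility. I expect this to be the main obstacle: the square root must be chosen as a polynomial in $W$ so that it inherits the intertwining. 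I would handle it by choosing branches consistently on the spectrum of $W$ via Lagrange interpolation, and checking that the interpolation polynomial behaves correctly under conjugation, which holds because the relevant spectra correspond.

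Finally set $R=\overline{Q}\,U=Q^{-1}U$ (note $\overline Q=Q^*=Q^{-1}$ since $Q$ is symmetric unitary). Then $RR^T=Q^{-1}UU^TQ^{-1}=Q^{-1}WQ^{-1}=I$, and $R$ is unitary, so $R$ is real orthogonal: $R^T=R^{-1}=R^*$ forces $\overline R=R$. Using $MQ=Q\overline M$, which is equivalent to $Q^{-1}MQ=\overline{\ }$-compatible with $Q^*MQ=M$ after the symmetric rewriting, one checks that $R^TMR=U^*Q\,M\,Q^{-1}U=U^*MU=S$. This gives $S\in\{R^TMR: R\in O(n,\mathbb R)\}$.
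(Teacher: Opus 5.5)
Your overall architecture is exactly the paper's: form the symmetric unitary $W=UU^T$, take a symmetric unitary square root $Q=p(W)$ by interpolation, set $R=Q^{-1}U$, and use $RR^T=I$ together with unitarity to conclude that $R$ is real orthogonal. The gap is the relation between $W$ and $M$. You never derive it, and the relation you guess, $MW=W\overline M$, is false in general. Start from $U^*MU=U^TM\overline U$, multiply on the left by $U$ and on the right by $U^T$, and use $\overline U\,U^T=\overline{UU^*}=I$. This gives
\[
MW=WM,
\]
so $W$ simply commutes with $M$. If $MW=W\overline M$ held as well, invertibility of $W$ would force $M=\overline M$.

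With your twisted relation, the step where the intertwining ``passes to $Q$'' cannot be rescued by any spectral compatibility check. The relations $MQ=Q\overline M$, $MW=W\overline M$ and $Q^2=W$ together give $Q^{-1}MQ=W^{-1}MW$, hence $QM=MQ$, and then again $M=\overline M$. Your final verification $R^TMR=U^*QMQ^{-1}U=U^*MU$ also silently uses $QM=MQ$, which contradicts the relation $MQ=Q\overline M$ you claimed to have established. As written, the argument is internally inconsistent.

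The fix is immediate, and it removes what you called the main obstacle. Once $MW=WM$ is known, every polynomial $Q=p(W)$ commutes with $M$. It is symmetric because $W^T=W$, and it is unitary because $W$ is normal and $|p(\lambda)|=1$ on its spectrum when $p(\lambda)^2=\lambda$. Then
\[
R^TMR=R^*MR=U^*QMQ^{-1}U=U^*MU=S.
\]
This is precisely the paper's proof; the Takagi/exponential description of $W$ is not needed.
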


\begin{proof}
The inclusion from right to left is immediate. Conversely, suppose \(S\in \mathcal{CS}_n\) is such that $S=V^*MV$ for some unitary \(V\). Since \(S=S^T\), we have
\[
V^*MV=V^TM \overline{V}.
\]
Multiplying on the left by \(V\) and on the right by \(V^T\) gives
\[
MVV^T=VV^TM.
\]
Set \(Q=VV^T\). Then \(Q\) is symmetric, unitary, and commutes with \(M\). Choose a symmetric unitary square root \(W\) of \(Q\), commuting with \(M\). This can be done by taking \(W=p(Q)\), where \(p\) is chosen by interpolation on the finite spectrum of \(Q\) so that \(p(\lambda)^2=\lambda\). Then \(W^2=Q\), \(W^T=W\), and \(WM=MW\).

Now set $ R=W^{-1}V. $ Then
\[
RR^T
=
W^{-1}VV^TW^{-T}
=
W^{-1}QW^{-1}
=
I.
\]
Also as \(R\) is unitary, \(R^T = R^{-1}=R^*\), and hence \(R\) is real, so \(R\in O(n,\mathbb R)\). Finally,
\[
S=V^*MV=R^TW^*MWR=R^TMR.
\]
So every complex symmetric representative in the unitary orbit of \(M\) lies in the real orthogonal orbit
\[
\{R^TMR:R\in O(n,\mathbb R)\}.
\]

\end{proof}

In the proof of Theorem \ref{mainfinal} we use the standard dimension count
\begin{equation}\label{dimcount}
    \dim_{\mathbb R}O(n,\mathbb R)=\frac{n(n-1)}2.
\end{equation}
This can be seen since the columns of a matrix in \(O(n,\mathbb R)\) form an orthonormal basis
of \(\mathbb R^n\). Choosing these columns successively gives
\[
(n-1)+(n-2)+\cdots+1=\frac{n(n-1)}2
\]
real parameters.

\begin{proof}[Proof of Theorem \ref{mainfinal}]
By the previous lemma,
\[
\mathcal U(\mathcal X)
=
\{R^TMR:M\in\mathcal X,\ R\in O(n,\mathbb R)\}.
\]
Consider the map
\[
\Phi:\mathcal X\times O(n,\mathbb R)\to\mathcal{CS}_n,
\qquad
\Phi(M,R)=R^TMR.
\]
The map \(\Phi\) is polynomial in the entries of \(M\) and \(R\), and hence is semialgebraic. Since \(\mathcal U(\mathcal X)=\Phi(\mathcal X\times O(n,\mathbb R))\),  Lemma \ref{facts} (4) implies that \(\mathcal U(\mathcal X)\) is semialgebraic.

Moreover, by Lemma \ref{facts} (3) semialgebraic maps do not increase dimension, so
\[
\dim_{\mathbb R}\mathcal U(\mathcal X)
\leq
\dim_{\mathbb R}\bigl(\mathcal X\times O(n,\mathbb R)\bigr).
\]
Lemma \ref{facts} (6) and the dimension count equation \eqref{dimcount} now give
\[
\dim_{\mathbb R}\bigl(\mathcal X\times O(n,\mathbb R)\bigr)
=
\dim_{\mathbb R}\mathcal X+\frac{n(n-1)}2,
\]
and so
\[
\dim_{\mathbb R}\mathcal U(\mathcal X)
\leq
\dim_{\mathbb R}\mathcal X+\frac{n(n-1)}2.
\]
\end{proof}

\section{Direct sums of truncated Toeplitz operators}\label{4}

\subsection{Truncated Toeplitz operator preliminaries}

We denote the open unit disc in the complex plane by $\mathbb{D}$ and the unit circle by $\mathbb{T}$. The Hardy space, $H^2$, is the space of all analytic functions $f(z) = \sum_{n=0}^{\infty}a_n z^n $ on $\mathbb{D}$ such that 
$$
\|f\|=\left(\sum_{n=0}^{\infty}\left|a_n\right|^2\right)^{\frac{1}{2}}<\infty .
$$
We refer the reader to \cite{duren1970theory, nikolski2002operators} for a detailed background on the Hardy space. Let 
\[
B(z)=\lambda\prod_{j=1}^n \frac{z-a_j}{1-\overline{a_j}z},
\qquad |\lambda|=1,\quad a_j\in\mathbb D.
\]
be a finite Blaschke product of degree \(n\), and let
\[
K_B=H^2\ominus BH^2
\]
be the corresponding model space so that \(\dim K_B=n\). We refer the reader to \cite{modelspacesgarcia, cima2000backward} for a detailed background on model spaces.

For $\phi \in L^{\infty}(\mathbb{T})$, we define the truncated Toeplitz operator (which we abbreviate to TTO), $A_{\phi}:  K_{B} \to K_{B}$, by 
$$
A_{\phi}(f) = P_{B} ( \phi f ),
$$
where $P_{B}:L^2(\mathbb{T}) \to K_{B}$ is the orthogonal projection. For the Blaschke product $B(z) =  z^n$, we have  $K_{z^n} = \operatorname{span} 1,z,z^2, ..., z^{n-1}$ and every TTO on $K_{z^n}$ is a Toeplitz matrix, so TTOs may be viewed as generalisations of Toeplitz matrices.

The following theorem is an immediate consequence of Theorem 1.15 in \cite{TTOonfinitedimn}. The original result gives a more explicit representation, including the precise unitary equivalence and the constants appearing in the formula. For our purposes, only the resulting structural form is needed.

\begin{thm}\label{Ross}
Let \(B\) be a finite Blaschke product of degree \(n\), let \(\alpha\in\mathbb T\), and write
\[
B^{-1}(\alpha)=\{\zeta_1,\ldots,\zeta_n\}.
\]
Let \(A\) be a TTO on \(K_B\). Then \(A\) is unitarily equivalent to a matrix
\(M_A=(m_{i,j})\in\mathcal{CS}_n\) for which there exist vectors \(x,y\in\mathbb C^n\) such that
\begin{equation}\label{1.16}
(\zeta_i-\zeta_j)m_{i,j}=x_i y_j-y_i x_j
\end{equation}
for all \(1\leq i,j\leq n\).
\end{thm}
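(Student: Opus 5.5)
The plan is to realise $M_A$ as the matrix of $A$ in a Clark basis for $K_B$. Then $(\zeta_i-\zeta_j)m_{i,j}$ becomes an entry of a commutator of $A$ with the Clark unitary. That commutator will have rank at most two, and the required form will follow from a small linear-algebra fact.

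\emph{Step 1 (Clark basis).} Let $U_\alpha$ be the Clark unitary on $K_B$. It is a rank-one perturbation of the compressed shift $A_z$, namely
\[
U_\alpha = A_z + c_\alpha\, k_0\otimes Ck_0,
\]
where $k_0$ is the reproducing kernel at $0$, $C$ is the standard conjugation $Cf=\overline{z}\,\overline{f}B$, and $c_\alpha$ is a constant depending on $\alpha$. Its eigenvalues are exactly the points $\zeta_1,\ldots,\zeta_n$ of $B^{-1}(\alpha)$, and its eigenvectors are the normalised boundary kernels $k_{\zeta_j}/\|k_{\zeta_j}\|$. These form an orthonormal basis. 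After multiplying each vector by a suitable unimodular constant, the basis becomes $C$-invariant.

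\emph{Step 2 (symmetry of $M_A$).} Every TTO is $C$-symmetric, i.e. $CAC=A^*$. Hence the matrix $M_A=(m_{i,j})$ of $A$ in a $C$-invariant orthonormal basis $\{e_j\}$ is symmetric, so $M_A\in\mathcal{CS}_n$. Since $U_\alpha e_j=\zeta_j e_j$, we have
\[
\langle (U_\alpha A-AU_\alpha)e_j,e_i\rangle=(\zeta_i-\zeta_j)m_{i,j}.
\]

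\emph{Step 3 (rank of the commutator).} By Sarason's theorem we may write $A=A_\psi+A_{\overline\chi}$ with $\psi,\chi\in K_B$.

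For the analytic part, $A_\psi$ commutes with $A_z$. Since $U_\alpha$ differs from $A_z$ by a rank-one operator, $[U_\alpha,A_\psi]$ has rank at most one.

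For the coanalytic part, $A_{\overline\chi}=A_\chi^*$ commutes with $A_z^*$. Since $U_\alpha^*$ differs from $A_z^*$ by a rank-one operator, $[U_\alpha^*,A_{\overline\chi}]$ has rank at most one. Because $U_\alpha$ is unitary,
\[
[U_\alpha,X]=-U_\alpha[U_\alpha^*,X]U_\alpha,
\]
so $[U_\alpha,A_{\overline\chi}]$ also has rank at most one.

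Therefore $[U_\alpha,A]$ has rank at most two.

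\emph{Step 4 (the form $x_iy_j-y_ix_j$).} Set $N=\bigl((\zeta_i-\zeta_j)m_{i,j}\bigr)_{i,j}$. This is the matrix of $[U_\alpha,A]$, so it has rank at most two. It is also antisymmetric, because $M_A$ is symmetric. A complex antisymmetric matrix of rank at most two can be written as $xy^T-yx^T$; this follows from the canonical form of antisymmetric matrices, since the rank is even and the matrix is a single $2\times2$ block $\begin{pmatrix}0&1\\-1&0\end{pmatrix}$ under a congruence. This gives vectors $x,y$ satisfying \eqref{1.16}.

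The main obstacle is Steps 1 and 2: checking that the Clark eigenbasis can be normalised to be $C$-invariant, and fixing the precise rank-one formula for $U_\alpha$. I would take both from the standard Clark theory for model spaces rather than recompute them.
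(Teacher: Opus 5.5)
Your Steps 1, 2 and 4 are fine. The $C$-real Clark eigenbasis is exactly the modified Clark basis behind the Cima--Ross--Wogen formula the paper cites. The gap is in Step 3, which carries the whole content of \eqref{1.16}: its rank count is false. If $[T,X]=0$ and $F=u\otimes v$ (with $(u\otimes v)h=\langle h,v\rangle u$), then $[T+F,X]=FX-XF=u\otimes X^*v-Xu\otimes v$, which in general has rank two, not one. This already happens in the smallest case. For $B(z)=z^2$, in the basis $\{1,z\}$ of $K_B$ one has $A_z=\begin{pmatrix}0&0\\1&0\end{pmatrix}$ and $U_\alpha=\begin{pmatrix}0&\alpha\\1&0\end{pmatrix}$. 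Hence $[U_\alpha,A_z]=\alpha\begin{pmatrix}1&0\\0&-1\end{pmatrix}$ and $[U_\alpha,A_{\overline z}]=\begin{pmatrix}-1&0\\0&1\end{pmatrix}$, both of rank two. So your argument as written only gives $\operatorname{rank}[U_\alpha,A]\leq 4$, which does not yield \eqref{1.16}.

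The missing observation is that all four rank-one terms involve the same two vectors $k_0$ and $Ck_0$. Indeed $[U_\alpha,A_\psi]=c_\alpha\bigl(k_0\otimes A_\psi^*Ck_0-A_\psi k_0\otimes Ck_0\bigr)$ and $[U_\alpha^*,A_{\overline\chi}]=\overline{c_\alpha}\bigl(Ck_0\otimes A_\chi k_0-A_{\overline\chi}Ck_0\otimes k_0\bigr)$. Conjugation by $U_\alpha$ sends $u\otimes v$ to $U_\alpha u\otimes U_\alpha^*v$. Since $A_zCk_0=-B(0)k_0$, the vector $U_\alpha Ck_0$ is a unimodular multiple of $k_0$ (it equals $\alpha k_0$ with Sarason's normalisation), so $U_\alpha^*k_0$ is a multiple of $Ck_0$. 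Hence $[U_\alpha,A]=k_0\otimes w_1+w_2\otimes Ck_0$ for some $w_1,w_2\in K_B$, which does have rank at most two.

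Alternatively, note that $[U_\alpha,A]=-(A-U_\alpha AU_\alpha^*)U_\alpha$. Sarason's characterisation $A-A_zAA_z^*=\psi\otimes k_0+k_0\otimes\chi$, together with $U_\alpha-A_z=c_\alpha\,k_0\otimes Ck_0$, shows that $A-U_\alpha AU_\alpha^*$ is again of the form $\psi'\otimes k_0+k_0\otimes\chi'$.

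With this repair your route is a valid and more conceptual alternative to the paper's, since it explains directly why the commutator has rank two. The paper does not argue via commutators at all; it reads \eqref{1.16} off the explicit entrywise description of TTO matrices in the modified Clark basis (Theorem 1.15 of the cited Cima--Ross--Wogen paper).
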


\textcolor{red}{}

\subsection{Showing Question \ref{conj} is false}

In this section we use the previous dimension bounds to show Question \ref{conj} is false. Before discussing an outline to show Question \ref{conj} is false, we prove the following lemma. In the following lemma $[X,Y] = XY-YX$ is the commutator of $X,Y \in M_{n}(\mathbb{C})$.

\begin{lem}\label{ranklem}
For $M_A$ as given by \eqref{1.16} and $D(B, \alpha) = \operatorname{diag}( \zeta_1, \zeta_2, \ldots , \zeta_n)$, where $\zeta_i \in B^{-1}(\alpha)$ we have 
$$
\operatorname{rank} ([ D(B, \alpha), M_A ]) \leq 2.
$$
\end{lem}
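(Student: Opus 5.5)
The plan is to compute the commutator entrywise and observe that it is exactly the difference of two rank-one matrices. With $D = D(B,\alpha) = \operatorname{diag}(\zeta_1,\ldots,\zeta_n)$ and $M_A = (m_{i,j})$, the $(i,j)$ entry of $DM_A$ is $\zeta_i m_{i,j}$ and that of $M_A D$ is $m_{i,j}\zeta_j$. Hence
\[
[D, M_A]_{i,j} = (\zeta_i - \zeta_j) m_{i,j}.
\]
By \eqref{1.16} of Theorem \ref{Ross}, this equals $x_i y_j - y_i x_j$ for all $1 \leq i,j \leq n$, where $x, y \in \mathbb C^n$ are the vectors supplied by that theorem.

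In matrix form this says
\[
[D, M_A] = x y^T - y x^T .
\]
Each of $xy^T$ and $yx^T$ has rank at most one. By subadditivity of rank, $\operatorname{rank}(X+Y) \leq \operatorname{rank} X + \operatorname{rank} Y$, the commutator has rank at most $2$. Equivalently, every column of $[D,M_A]$ lies in $\operatorname{span}\{x,y\}$.

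There is no genuine obstacle here. The only points to check are that the diagonal entries cause no trouble and that the same $\zeta_i$ appear in both places. On the diagonal, both sides of the identity vanish: $(\zeta_i - \zeta_i)m_{i,i} = 0 = x_i y_i - y_i x_i$. The matrix $D(B,\alpha)$ must be built from the same enumeration $\zeta_1, \ldots, \zeta_n$ of $B^{-1}(\alpha)$ that is used in Theorem \ref{Ross}, which is implicit in the statement. With these observations, the identity \eqref{1.16} holds for every pair $(i,j)$, so the matrix identity above is exact and the rank bound follows.
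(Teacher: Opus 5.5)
Your proposal is correct and matches the paper's proof: you compute $[D(B,\alpha),M_A]_{i,j}=(\zeta_i-\zeta_j)m_{i,j}$, use \eqref{1.16} to identify the commutator with $xy^T-yx^T$, and conclude by subadditivity of rank. Your remarks about the diagonal entries and about using the same enumeration of $B^{-1}(\alpha)$ as in Theorem \ref{Ross} are extra care that the paper leaves implicit.
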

\begin{proof}
For every \(i,j\),
\[
[D(B, \alpha),M_A]_{i,j}=(\zeta_i-\zeta_j)m_{i,j}.
\]
By Theorem \ref{Ross}, there exist vectors \(x,y\in\mathbb C^n\) such that
\[
[D(B, \alpha),M_A]_{i,j}=x_i y_j-y_i x_j.
\]
Therefore
\[
[D(B, \alpha),M_A]=xy^T-yx^T
\]
is the sum of two rank-one matrices, and hence
\[
\operatorname{rank}[D(B, \alpha),M_A]\leq2.
\]
\end{proof}

We denote the set of matrices 
\begin{align}
&\mathcal{I} = \{ \text{irreducible } n \times n \text{ matrices} \} \\
&\mathcal{D}
=
\left\{
\operatorname{diag}(\zeta_1,\ldots,\zeta_n):
\zeta_i\in\mathbb T,\ 
\zeta_i\neq \zeta_j \text{ for } i\neq j
\right\}, \\
&\mathcal{C}
=
\left\{
M\in\mathcal{CS}_n:
\exists D\in\mathcal{D}
\text{ such that }
\operatorname{rank}[D,M]\leq 2
\right\}, \\
\end{align}

% Observe that if Question \ref{conj} were true, then every irreducible complex symmetric matrix would be unitarily equivalent to a single TTO. Since the basis appearing in Theorem \ref{Ross} is orthonormal, the previous lemma would then imply that every irreducible matrix in \(\mathcal{CS}_n\) belongs to \(\mathcal{U} ( \mathcal{C})\). Hence Question \ref{conj} would imply
% \[
% \mathcal{I}\cap\mathcal{CS}_n=\mathcal{I}\cap\mathcal{U} ( \mathcal{C}).
% \]
% We shall disprove Question \ref{conj} by proving the following theorem.

\begin{thm}\label{mainthm}
Let $n \geq 10$. Then $\mathcal{U} ( \mathcal{C})$ and $(\mathcal{I} \bigcap \mathcal{CS}_n)$ are semialgebraic and
$$
\dim_{\mathbb{R}} \mathcal{U} ( \mathcal{C}) < \dim_{\mathbb{R}} (\mathcal{I} \bigcap \mathcal{CS}_n).
$$
\end{thm}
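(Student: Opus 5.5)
Our plan has three parts: show both sets are semialgebraic, bound $\dim_{\mathbb R}\mathcal C$ from above, and show $\mathcal I\cap\mathcal{CS}_n$ has full dimension $n(n+1)$.

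\emph{Semialgebraicity.} Write $\mathcal D$ in real coordinates via $|\zeta_i|^2=1$ and $\zeta_i\neq\zeta_j$; it is semialgebraic. The condition $\operatorname{rank}[D,M]\leq 2$ says that all $3\times 3$ minors of $[D,M]$ vanish, which is a family of polynomial equations in the entries of $D$ and $M$. Hence $\{(D,M)\in\mathcal D\times\mathcal{CS}_n:\operatorname{rank}[D,M]\le 2\}$ is semialgebraic, and $\mathcal C$ is its projection, so $\mathcal C$ is semialgebraic by Lemma \ref{facts}(4). Theorem \ref{mainfinal} then shows that $\mathcal U(\mathcal C)$ is semialgebraic. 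For $\mathcal I$, a matrix $M$ is reducible exactly when there is an orthogonal projection $P\neq 0,I$ with $PM=MP$ and $PM^*=M^*P$. The set of such pairs $(P,M)$ is semialgebraic: use $P=P^*=P^2$, and express $P\neq 0,I$ through $0<\operatorname{tr}P<n$. Projecting gives the reducible matrices, and taking complements and intersecting with $\mathcal{CS}_n$ shows $\mathcal I\cap\mathcal{CS}_n$ is semialgebraic.

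\emph{Upper bound.} We parametrize $\mathcal C$. Fix $D\in\mathcal D$, which contributes $n$ real parameters. Since $\operatorname{rank}[D,M]\le2$ and $[D,M]$ is antisymmetric when $M$ is symmetric, we have $[D,M]=xy^T-yx^T$; conversely, any antisymmetric matrix of rank at most $2$ has this form. The set of antisymmetric complex matrices of rank at most $2$ is the cone over the Grassmannian $\mathrm{Gr}(2,n)$, so it has complex dimension $2(n-2)+1=2n-3$ and real dimension $4n-6$. Given $D$ and $[D,M]$, the off-diagonal entries are recovered as $m_{ij}=[D,M]_{ij}/(\zeta_i-\zeta_j)$ because the $\zeta_i$ are distinct. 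The diagonal entries are free, contributing $2n$ real parameters. Thus $\mathcal C$ is contained in the image of a semialgebraic map from a set of dimension at most $n+(4n-6)+2n=7n-6$. Lemma \ref{facts}(3),(5) give $\dim_{\mathbb R}\mathcal C\le 7n-6$, and Theorem \ref{mainfinal} gives
\[
\dim_{\mathbb R}\mathcal U(\mathcal C)\le 7n-6+\tfrac{n(n-1)}2 .
\]

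\emph{Lower bound, and the main obstacle.} We must show $\dim_{\mathbb R}(\mathcal I\cap\mathcal{CS}_n)=n(n+1)$. By Lemma \ref{lem22} it suffices to show this set contains a nonempty open subset of $\mathcal{CS}_n$, and this is the step we expect to be hardest. We would first try to show that $\mathcal I\cap\mathcal{CS}_n$ is itself open and nonempty, i.e.\ that its complement, the reducible symmetric matrices, is closed. This holds because the set of pairs $(P,M)$ above is closed and the projections with $1\le\operatorname{tr}P\le n-1$ form a compact set, so the projection onto the $M$-coordinate is closed. For nonemptiness we need one irreducible symmetric matrix. The candidate is the symmetric tridiagonal matrix $M$ with distinct real diagonal entries and nonzero off-diagonal entries $i$ (the imaginary unit); this makes the off-diagonal part skew-Hermitian. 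Any reducing projection $P$ commutes with $M+M^*$, the real diagonal matrix with distinct entries, so $P$ is diagonal. It also commutes with $M-M^*$, whose off-diagonal pattern connects all indices, so $P$ is a scalar and hence $P=0$ or $I$.

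\emph{Conclusion.} Comparing the two bounds, we need $7n-6+\frac{n(n-1)}2<n(n+1)$, which is equivalent to $n^2-11n+12>0$. This holds exactly for $n\ge 10$, since $100-110+12=2>0$ while $n=9$ gives $-6$. This matches the hypothesis $n\ge 10$ and completes the comparison.
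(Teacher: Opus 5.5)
Your proposal is correct and follows the paper's own proof. You show semialgebraicity via projections and complements, bound $\dim_{\mathbb R}\mathcal C\le n+2n+(4n-6)$ using the recovery $m_{ij}=[D,M]_{ij}/(\zeta_i-\zeta_j)$, and get full dimension $n(n+1)$ for $\mathcal I\cap\mathcal{CS}_n$ from closedness of the reducible set and Lemma \ref{lem22}. The differences are minor: you cite the dimension $2n-3$ of the cone over $\mathrm{Gr}(2,n)$ where the paper proves the $4n-6$ bound directly on the Pfaffian charts $C_{pq}\neq0$, and you give an explicit irreducible tridiagonal example where the paper only asserts non-emptiness.
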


\begin{cor}\label{counterex}
    Let $n \geq  10$. Then Question \ref{conj} is false.
\end{cor}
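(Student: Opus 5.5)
The statement to prove is Corollary \ref{counterex}, and I will derive it from Theorem \ref{mainthm} by contradiction. Fix $n\geq 10$ and suppose Question \ref{conj} holds. The plan has three steps.

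\textbf{Step 1: a single TTO suffices for irreducible matrices.} Let $S\in\mathcal I\cap\mathcal{CS}_n$. By assumption $S$ is unitarily equivalent to a direct sum $A_1\oplus\cdots\oplus A_k$ of TTOs. If $k\geq 2$, the summand subspaces are reducing for the direct sum. Pulling them back through the unitary gives a nontrivial reducing subspace for $S$, which contradicts irreducibility. So $k=1$, and $S$ is unitarily equivalent to a single TTO $A$ on some $K_B$ with $\deg B=n$.

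\textbf{Step 2: placing $S$ in $\mathcal U(\mathcal C)$.} Fix any $\alpha\in\mathbb T$. Theorem \ref{Ross} gives that $A$ is unitarily equivalent to a matrix $M_A\in\mathcal{CS}_n$ satisfying \eqref{1.16}.

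Since $B$ is a finite Blaschke product of degree $n$, the equation $B(z)=\alpha$ has $n$ distinct solutions on $\mathbb T$. This is because $B$ restricted to $\mathbb T$ is a covering map of degree $n$, with the argument strictly increasing. Hence $D(B,\alpha)\in\mathcal D$.

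Lemma \ref{ranklem} gives $\operatorname{rank}[D(B,\alpha),M_A]\leq 2$, so $M_A\in\mathcal C$. Unitary equivalence is transitive, so $S$ is unitarily equivalent to $M_A$. As $S$ is symmetric, this means $S\in\mathcal U(\mathcal C)$.

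\textbf{Step 3: the dimension contradiction.} Step 2 shows $\mathcal I\cap\mathcal{CS}_n\subseteq\mathcal U(\mathcal C)$. Both sets are semialgebraic by Theorem \ref{mainthm}, so Lemma \ref{facts} (1) gives
\[
\dim_{\mathbb R}(\mathcal I\cap\mathcal{CS}_n)\leq\dim_{\mathbb R}\mathcal U(\mathcal C).
\]
This contradicts the strict inequality in Theorem \ref{mainthm}. Hence there is an irreducible $S\in\mathcal{CS}_n$ that is not unitarily equivalent to a direct sum of TTOs, and Question \ref{conj} fails.

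The only points that need care are the irreducibility reduction in Step 1 and the distinctness of the $\zeta_i$ in Step 2. Both are standard, so I expect no real obstacle: the substantive work lies in Theorem \ref{mainthm} itself.
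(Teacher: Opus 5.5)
Your proposal is correct and follows the paper's proof essentially step for step. Irreducibility forces a single TTO summand. Theorem \ref{Ross}, Lemma \ref{ranklem} and the distinctness of the points of $B^{-1}(\alpha)$ on $\mathbb T$ then put every irreducible symmetric matrix in $\mathcal U(\mathcal C)$, and Lemma \ref{facts}(1) contradicts the strict inequality of Theorem \ref{mainthm}. The only difference is that you spell out the reducing-subspace argument and the strict monotonicity of $\arg B$ on $\mathbb T$, both of which the paper simply asserts.
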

\begin{proof}
Suppose for contradiction Question \ref{conj} is true. Then every irreducible matrix in \(\mathcal{CS}_n\) is unitarily equivalent to a single TTO. Since \(B\) is a finite Blaschke product and \(\alpha\in\mathbb T\), the points in
\(B^{-1}(\alpha)=\{\zeta_1,\ldots,\zeta_n\}\) lie in \(\mathbb T\) and are pairwise distinct, and hence
\[
D(B,\alpha)=\operatorname{diag}(\zeta_1,\ldots,\zeta_n)\in\mathcal D.
\]
So from Lemma \ref{ranklem} we deduce every irreducible matrix in \(\mathcal{CS}_n\) belongs to \(\mathcal{U} ( \mathcal{C})\). Thus we conclude
\[
\mathcal{I}\cap\mathcal{CS}_n \subseteq \mathcal{I}\cap\mathcal{U} ( \mathcal{C}),
\]
and consequently 
$$
\mathcal{I}\cap\mathcal{CS}_n \subseteq \mathcal{U} ( \mathcal{C}).
$$
This contradicts the previous theorem by Lemma \ref{facts} (1).
\end{proof}

\subsection{Proof of Theorem \ref{mainthm}}

\begin{lem}
Let \(D \in \mathcal{D} \). Then
\[
\dim_{\mathbb R}\{M\in \mathcal{CS}_n:\operatorname{rank}[D,M]\leq 2\}\leq 6n-6.
\]
\end{lem}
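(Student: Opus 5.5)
The natural approach is to exploit the fact that, for fixed $D=\operatorname{diag}(\zeta_1,\ldots,\zeta_n)\in\mathcal D$, the commutator map
\[
L_D:\mathcal{CS}_n\to M_n(\mathbb C),\qquad L_D(M)=[D,M],\qquad (L_D M)_{i,j}=(\zeta_i-\zeta_j)m_{i,j},
\]
is real linear. Since $M$ is symmetric, $L_D(M)$ is skew-symmetric. The kernel of $L_D$ consists exactly of the diagonal symmetric matrices, because $\zeta_i\neq\zeta_j$ for $i\neq j$. Conversely, every skew-symmetric matrix $K$ arises: set $m_{i,j}=K_{i,j}/(\zeta_i-\zeta_j)$ for $i\ne j$, which is symmetric since both $K$ and $\zeta_i-\zeta_j$ are antisymmetric. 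Thus $L_D$ maps $\mathcal{CS}_n$ onto the space $\mathcal{A}_n$ of complex skew-symmetric matrices, with kernel the diagonal matrices, which has real dimension $2n$.

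Denote our set by $\mathcal C_D=\{M\in\mathcal{CS}_n:\operatorname{rank}[D,M]\le 2\}$. It is semialgebraic, being cut out by the vanishing of all $3\times 3$ minors of $[D,M]$, which are real polynomial conditions in the real and imaginary parts of the entries. We have $\mathcal C_D=L_D^{-1}(\mathcal R_2)$, where $\mathcal R_2=\{K\in\mathcal A_n:\operatorname{rank}K\le 2\}$. Every fibre of $L_D$ over a point of its image is a translate of the kernel, so it has real dimension $2n$. Applying Lemma \ref{facts} (5) to the graph of $L_D$ restricted to $\mathcal C_D$ (or choosing a linear complement of the kernel and applying Lemma \ref{facts} (6) and (3)) gives
\[
\dim_{\mathbb R}\mathcal C_D\le \dim_{\mathbb R}\mathcal R_2+2n .
\]

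It therefore remains to show $\dim_{\mathbb R}\mathcal R_2\le 4n-6$. Skew-symmetric matrices have even rank, so every nonzero $K\in\mathcal R_2$ has rank exactly $2$ and can be written $K=xy^T-yx^T$ with $x,y\in\mathbb C^n$. The set $\mathcal R_2$ is thus the image of the polynomial map $(x,y)\mapsto xy^T-yx^T$ from $\mathbb C^{2n}\cong\mathbb R^{4n}$, which by Lemma \ref{facts} (3) only yields the crude bound $4n$.

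The main obstacle is sharpening this to $4n-6$, which requires accounting for the redundancy in the parametrisation. The cleanest route is to note that $K$ depends only on the plane $\operatorname{span}(x,y)$ together with a scalar, namely the wedge $x\wedge y$. Changing $(x,y)$ by any $g\in SL_2(\mathbb C)$ leaves $K$ unchanged, and $SL_2(\mathbb C)$ has real dimension $6$. To make this rigorous with the tools of Lemma \ref{facts}, I would stratify by which pair of coordinates $(i,j)$ has $K_{i,j}\ne 0$. On that stratum, one can normalise $(x,y)$ by the $SL_2(\mathbb C)$ action so that $(x_i,x_j,y_i,y_j)=(1,0,0,c)$ with $c\neq0$. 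This leaves $2(n-2)$ free complex coordinates plus $c$, that is, $2(n-2)+1$ complex parameters, or $4n-6$ real parameters. Hence each stratum is the image of a semialgebraic map from a subset of $\mathbb R^{4n-6}$.

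Finally, $\mathcal R_2$ is the finite union of these strata together with $\{0\}$, so Lemma \ref{facts} (2) and (3) give $\dim_{\mathbb R}\mathcal R_2\le 4n-6$. Combining with the fibre bound,
\[
\dim_{\mathbb R}\mathcal C_D\le (4n-6)+2n=6n-6 .
\]
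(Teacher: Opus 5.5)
Your proof is correct and follows the paper's argument. The paper uses the same linear reduction, in which the diagonal kernel of $M\mapsto[D,M]$ contributes $2n$, down to the set of skew-symmetric matrices of rank at most $2$, and the same stratification by a nonzero entry $K_{pq}$ with $2n-3$ complex parameters. The only difference is how you get the count: you use the factorisation $K=xy^T-yx^T$ with an $SL_2(\mathbb C)$ normalisation, under which the free parameters are $y_k=K_{pk}$ and $x_k=-K_{qk}/c$. The paper instead reads the count directly off the vanishing $4\times4$ Pfaffians $C_{pq}C_{ij}-C_{pi}C_{qj}+C_{pj}C_{qi}=0$, so it never needs the factorisation of rank-two skew-symmetric matrices.
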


\begin{proof}
Let \(M=(m_{ij})\in \mathcal{CS}_n\). It is readily checked that
\[
[D,M]_{ij}=(\zeta_i-\zeta_j)m_{ij},
\]
and thus $[D,M]$ is skew-symmetric. Furthermore, the map
\[
M\mapsto \bigl((m_{11},\ldots,m_{nn}),[D,M]\bigr)
\]
is a real linear isomorphism from \(\mathcal{CS}_n\) onto
\[
\mathbb C^n\times\{C\in M_n(\mathbb C):C^T=-C\},
\]
as its inverse is
$$
((d_1,\ldots,d_n),C)\mapsto M=(m_{ij}),\qquad
m_{ii}=d_i,\quad m_{ij}=\frac{C_{ij}}{\zeta_i-\zeta_j}\quad (i\neq j).
$$

Under this real linear isomorphism, the subset
\[
\{M\in\mathcal{CS}_n:\operatorname{rank}[D,M]\leq2\}
\]
is mapped bijectively onto
\[
\mathbb C^n\times
\{C\in M_n(\mathbb C):C^T=-C,\ \operatorname{rank}C\leq2\}.
\]
Since the ambient map and its inverse are real linear, this restriction is a semialgebraic bijection with semialgebraic inverse. Therefore, by Lemma \ref{facts}(3), the two sets have the same real semialgebraic dimension.
Therefore
\[
\dim_{\mathbb R}\{M\in\mathcal{CS}_n:\operatorname{rank}[D,M]\leq2\}
=
2n+
\dim_{\mathbb R}\{C\in M_n(\mathbb C):C^T=-C,\ \operatorname{rank}C\leq2\}.
\]
Thus to complete the proof we show the real semialgebraic dimension of
\begin{equation}\label{est2}
    \{C\in M_n(\mathbb C):C^T=-C,\ \operatorname{rank}C\leq2\}
\end{equation}
is at most \(4n-6\).

% Under this isomorphism, the condition \(\operatorname{rank}[D,M]\leq2\) becomes the condition that the skew-symmetric matrix \(C=[D,M]\) has rank at most \(2\). Thus it remains to estimate the real semialgebraic dimension of
% \begin{equation}\label{est2}
%     \{C\in M_n(\mathbb C):C^T=-C,\ \operatorname{rank}C\leq2\}.
% \end{equation}

For any four distinct indices \(p,q,i,j\), consider the corresponding \(4\times4\) principal submatrix $C_{p,q,i,j}$ of \(C\), obtained by restricting to the rows and columns indexed by \(p,q,i,j\). Since \(C\) is skew-symmetric, this submatrix is skew-symmetric and direct computation of the determinant of $4 \times 4$ skew symmetric matrices yields
$$
    \det (C_{p,q,i,j} ) = \bigl(C_{pq}C_{ij}-C_{pi}C_{qj}+C_{pj}C_{qi}\bigr)^2.
$$
Since \(\operatorname{rank} C\leq 2\), this determinant must vanish. Hence
\begin{equation}\label{deteqn}
C_{pq}C_{ij}-C_{pi}C_{qj}+C_{pj}C_{qi}=0.
\end{equation}

% For any four distinct indices \(p,q,i,j\), consider the corresponding \(4\times 4\) principal submatrix of \(C\), obtained by restricting \(C\) to the rows and columns indexed by \(p,q,i,j\). Since \(C\) is skew-symmetric, this principal submatrix is also skew-symmetric. Since \(\operatorname{rank}C\leq 2\), every \(4\times4\) principal submatrix of \(C\) has determinant zero. For a \(4\times4\) skew-symmetric matrix, the determinant is the square of its Pfaffian. Hence the Pfaffian of every \(4\times4\) principal skew-symmetric submatrix of \(C\) vanishes. Therefore, for indices \(p,q,i,j\),
% \[
% C_{pq}C_{ij}-C_{pi}C_{qj}+C_{pj}C_{qi}=0.
% \] Therefore, for indices \(p,q,i,j\),
% \[
% C_{pq}C_{ij}-C_{pi}C_{qj}+C_{pj}C_{qi}=0.
% \]
% Thus, on the chart \(C_{pq}\neq0\), the whole matrix \(C\) is determined by the entries
% \[
% C_{pq},\qquad C_{pk},C_{qk}\quad(k\neq p,q).
% \]
% There are
% \[
% 1+2(n-2)=2n-3
% \]
% complex parameters. Hence this chart has real dimension at most $2(2n-3)=4n-6$.

% Clearly, there are only finitely many choices of \(p<q\), so the whole rank-\(\leq2\) skew-symmetric set has real dimension at most \(4n-6\).

Fix \(p<q\), and consider the subset
\[
\mathcal S_{pq}
=
\{C:C^T=-C,\ \operatorname{rank}C\leq 2,\ C_{pq}\neq0\}.
\]
On this subset, the entries
\[
C_{pq},\qquad C_{pk},\ C_{qk}\quad (k\neq p,q)
\]
determine all the remaining entries of \(C\) since if \(i,j\notin\{p,q\}\),
then \eqref{deteqn} gives
\[
C_{ij}
=
\frac{C_{pi}C_{qj}-C_{pj}C_{qi}}{C_{pq}}.
\]
The diagonal entries of matrices in $\mathcal{S}_{pq}$ are zero, and the lower triangular entries are determined by skew-symmetry. Thus, \(\mathcal S_{pq}\) is contained in the image of a semialgebraic map from
\(\mathbb C^*\times\mathbb C^{2n-4}\). By Lemma \ref{facts}(3) and Lemma \ref{facts}(6),
\[
\dim_{\mathbb R}\mathcal S_{pq}\leq 2(2n-3)=4n-6.
\]

Finally, the rank-\(\leq2\) skew-symmetric set given as equation \eqref{est2} is the union of the zero matrix and the finitely many subsets \(\mathcal S_{pq}\), \(p<q\). Therefore, by the finite-union property given as Lemma \ref{facts}(2), the set \eqref{est2} has real dimension at most \(4n-6\).

% Returning to \(M\), the diagonal entries \(m_{11},\ldots,m_{nn}\) contribute \(n\) complex parameters, i.e. \(2n\) real parameters. Therefore
% \[
% \dim_{\mathbb R}\{M\in \mathcal{CS}_n:\operatorname{rank}[D,M]\leq2\}
% \leq
% 2n+(4n-6)
% =
% 6n-6.
% \]
\end{proof}

\begin{prop}\label{main1}
The set $\mathcal{C}$ is semialgebraic and $ \dim_{\mathbb{R}}  ( \mathcal{C}) \leq 7n - 6 $.
\end{prop}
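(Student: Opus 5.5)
We write $\mathcal C$ as the projection of an incidence set and bound its dimension by fibring over $\mathcal D$. Consider
\[
\mathcal E=\{(D,M)\in\mathcal D\times\mathcal{CS}_n:\operatorname{rank}[D,M]\leq 2\},
\]
viewed inside $M_n(\mathbb C)\times\mathcal{CS}_n$. Then $\mathcal C$ is the projection of $\mathcal E$ onto the second factor.

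We first check that $\mathcal E$ is semialgebraic.
\begin{itemize}
\item The set $\mathcal D$ is semialgebraic. It consists of the diagonal matrices whose off-diagonal entries vanish, whose diagonal entries satisfy $(\operatorname{Re}\zeta_i)^2+(\operatorname{Im}\zeta_i)^2=1$, and whose diagonal entries satisfy $|\zeta_i-\zeta_j|^2>0$ for $i\neq j$. All of these are polynomial conditions in the real coordinates.
\item The condition $\operatorname{rank}[D,M]\leq 2$ says that every $3\times 3$ minor of $[D,M]$ vanishes. Each such minor is a complex polynomial in the entries of $D$ and $M$. Taking real and imaginary parts turns each minor condition into two real polynomial equations.
\end{itemize}
Hence $\mathcal E$ is semialgebraic. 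By Lemma \ref{facts}(4) (Tarski--Seidenberg), $\mathcal C$ is semialgebraic and $\dim_{\mathbb R}\mathcal C\leq\dim_{\mathbb R}\mathcal E$.

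Next we bound $\dim_{\mathbb R}\mathcal E$ using Lemma \ref{facts}(5) with $Y=\mathcal D$.
\begin{itemize}
\item For each $D\in\mathcal D$, the fibre $\mathcal E_D$ is exactly $\{M\in\mathcal{CS}_n:\operatorname{rank}[D,M]\leq2\}$. By the preceding lemma, this fibre has dimension at most $6n-6$.
\item The set $\mathcal D$ is contained in the image of the polynomial map $\mathbb T^n\to M_n(\mathbb C)$, $(\zeta_1,\ldots,\zeta_n)\mapsto\operatorname{diag}(\zeta_1,\ldots,\zeta_n)$.
\item The torus $\mathbb T^n$ has real dimension $n$: it is a product of $n$ circles, so use Lemma \ref{facts}(6). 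Alternatively, each circle is the image of the rational parametrisation $t\mapsto\bigl(\tfrac{1-t^2}{1+t^2},\tfrac{2t}{1+t^2}\bigr)$ together with one extra point, which gives dimension at most $1$. Lemma \ref{facts}(3) then gives $\dim_{\mathbb R}\mathcal D\leq n$.
\end{itemize}
Lemma \ref{facts}(5) therefore yields
\[
\dim_{\mathbb R}\mathcal E\leq n+(6n-6)=7n-6,
\]
and so $\dim_{\mathbb R}\mathcal C\leq 7n-6$.

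The main obstacle is making sure the fibre lemma applies as stated. Lemma \ref{facts}(5) requires $\mathcal E\subseteq Y\times\mathbb R^m$ with $Y$ semialgebraic, so we identify $\mathcal{CS}_n$ with $\mathbb R^{n(n+1)}$ and take $Y=\mathcal D$. We must also know $\dim_{\mathbb R}\mathcal D\leq n$, which the circle argument above settles; this is a routine but necessary check. The preceding lemma's bound holds for every $D\in\mathcal D$, which is exactly the uniform fibre bound needed.
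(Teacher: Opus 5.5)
Your proposal is correct and follows the paper's proof essentially step for step. It uses the same incidence set $\mathcal E$, the same semialgebraicity check via $3\times 3$ minors, the fibre bound $6n-6$ from the preceding lemma combined with $\dim_{\mathbb R}\mathcal D\leq n$ through Lemma~\ref{facts}(5), and projection via Lemma~\ref{facts}(4); your explicit justification of $\dim_{\mathbb R}\mathcal D\leq n$ via $\mathbb T^n$ is a small addition to what the paper simply calls clear.
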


\begin{proof}
Set
\[
\mathcal E
=
\{(D,M)\in\mathcal{D}\times \mathcal{CS}_n:\operatorname{rank}[D,M]\leq2\}.
\]
The set \(\mathcal D\) is semialgebraic, since it is defined by the conditions
\[
|\zeta_i|^2=1,\qquad |\zeta_i-\zeta_j|^2>0\quad(i\neq j).
\]
The entries of \([D,M]\) are polynomial functions of the real and imaginary parts
of the entries of \(D\) and \(M\). The condition
\(\operatorname{rank}[D,M]\leq 2\) is equivalent to the vanishing of all
\(3\times 3\) minors of \([D,M]\). Since these minors are complex-valued
polynomials, this is equivalent to the vanishing of their real and imaginary
parts, which are real polynomial equations. Hence \(E\) is semialgebraic.

For each fixed \(D\in\mathcal{D}\), the fibre over \(D\) is
\[
\{M\in \mathcal{CS}_n:\operatorname{rank}[D,M]\leq2\}.
\]
By the previous lemma, this fibre has real dimension at most $6n-6$.
Furthermore as \(\mathcal{D}\) is parametrised by \(n\) distinct points of \(\mathbb T\), we clearly have $\dim_{\mathbb R}\mathcal{D}\leq n$. Hence, by part 5 of Lemma \ref{facts},
\[
\dim_{\mathbb R}\mathcal E
\leq
n+(6n-6)
=
7n-6.
\]
Since \(\mathcal C\) is the projection of \(\mathcal E\) onto the \(M\)-coordinate, Lemma \ref{facts} (4) shows $\mathcal{C}$ is semialgebraic and
\[
\dim_{\mathbb R}\mathcal C\leq7n-6.
\]
\end{proof}

% In particular we have 
% \begin{equation}\label{antisymmetric}
%     \operatorname{rank}( UDU^*SUD^*U^* - S) = \operatorname{rank}([D, U^*SU])
% \end{equation}
% writing $V^* =  UDU^* $ if we can show there exists a unitary matrix with simple spectrum $V$ such that $\operatorname{rank}V^*SV - S \geq 3$ we disprove the conjecture

\begin{thm}\label{main2}
The set $\mathcal{I} \bigcap \mathcal{CS}_n$ is semialgebraic and $\dim_{\mathbb{R}} (\mathcal{I} \bigcap \mathcal{CS}_n) = n(n+1)$
\end{thm}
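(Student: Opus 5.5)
Semialgebraicity follows from expressing reducibility through reducing projections, and the dimension count follows because the irreducible matrices contain a non-empty open set. Recall that $M$ is reducible if there is an orthogonal projection $P$ with $P\neq 0$, $P\neq I$ and $PM=MP$. This is the same as $PM=MP$ and $PM^*=M^*P$, since $P$ is self-adjoint and both $P$ and its complement reduce $M$.

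\textbf{Semialgebraicity.} Consider the set
\[
\mathcal R=\{(M,P)\in \mathcal{CS}_n\times M_n(\mathbb C): P^2=P,\ P^*=P,\ PM=MP,\ \operatorname{tr}P\notin\{0,n\}\}.
\]
All of these conditions are real polynomial equalities and inequalities in the real and imaginary parts of the entries. The trace of an orthogonal projection is real, so the condition $\operatorname{tr} P\notin\{0,n\}$ can be written as $(\operatorname{Re}\operatorname{tr}P)^2>0$ together with $(\operatorname{Re}\operatorname{tr}P-n)^2>0$. Hence $\mathcal R$ is semialgebraic. Its projection onto the $M$-coordinate is the set of reducible symmetric matrices, which is semialgebraic by Lemma \ref{facts}(4). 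Then $\mathcal I\cap\mathcal{CS}_n$ is its complement inside the semialgebraic set $\mathcal{CS}_n$, so it is semialgebraic as well.

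\textbf{Openness.} Next I show that $\mathcal I\cap \mathcal{CS}_n$ contains a non-empty open subset of $\mathcal{CS}_n\cong\mathbb R^{n(n+1)}$. The upper bound $n(n+1)$ is automatic from Lemma \ref{facts}(1), so Lemma \ref{lem22} then gives equality. Rather than proving that irreducibility is open in general, I would exhibit an explicit open set $U$. Take $U$ to be the set of $M\in\mathcal{CS}_n$ whose real part $\operatorname{Re}M=(M+\overline M)/2$ has $n$ distinct eigenvalues, and such that, in an eigenbasis of $\operatorname{Re}M$, every off-diagonal entry of $\operatorname{Im}M=(M-\overline M)/(2i)$ is non-zero. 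Both $\operatorname{Re}M$ and $\operatorname{Im}M$ are real symmetric matrices. A projection $P$ reduces $M$ if and only if it commutes with $M$ and $M^*=\overline M$, which happens if and only if it commutes with both $\operatorname{Re}M$ and $\operatorname{Im}M$.

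Commuting with $\operatorname{Re}M$, which has simple spectrum, forces $P$ to be diagonal in its eigenbasis. Then $P$ has entries in $\{0,1\}$ on the diagonal, because it is a projection. Commuting with $\operatorname{Im}M$ forces $(p_i-p_j)(\operatorname{Im}M)_{ij}=0$ in that basis. Since the off-diagonal entries are non-zero, all the $p_i$ are equal, so $P\in\{0,I\}$ and $M$ is irreducible. A concrete example shows $U\neq\emptyset$: take $\operatorname{Re}M=\operatorname{diag}(1,\dots,n)$ and $\operatorname{Im}M$ equal to the all-ones matrix.

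\textbf{Main obstacle.} The delicate point is openness of $U$. The eigenbasis of $\operatorname{Re}M$ is only defined up to signs, and it must vary continuously. Sign changes do not affect whether an entry is non-zero, so the condition is well defined. Near a matrix with simple spectrum, the spectral projections $E_k(M)$ of $\operatorname{Re}M$ depend continuously (indeed analytically) on $M$. So I would phrase the condition basis-free as $E_j(M)\,\operatorname{Im}M\,E_k(M)\neq 0$ for all $j\neq k$. This condition is visibly open, and it avoids choosing eigenvectors altogether.

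If I wanted to avoid spectral perturbation theory, an alternative is purely semialgebraic. The reducible set is a finite union of projections of the sets $\mathcal R_k$, where $\operatorname{tr}P=k$. For each $k$, a dimension count on $(M,P)$ shows that the projection has dimension less than $n(n+1)$. Then the irreducible set has full dimension because $\mathcal{CS}_n$ cannot be a union of lower-dimensional sets. However, I expect the open-set route to be shorter and cleaner.
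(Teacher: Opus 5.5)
Your proof is correct. The semialgebraicity step is the same as the paper's: project the set of pairs $(S,P)$, with $P$ a non-trivial orthogonal projection commuting with $S$, and take the complement.

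The dimension step takes a different route. The paper proves the stronger fact that $\mathcal I\cap\mathcal{CS}_n$ is itself open. If reducible $S_k\to S$ with reducing projections $P_k$, then compactness of the set of non-trivial orthogonal projections gives a limit projection $P$ commuting with $S$. So the reducible set is closed, and Lemma \ref{lem22} applies directly to $\mathcal I\cap\mathcal{CS}_n$; the paper simply asserts non-emptiness as clear.

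You instead exhibit only a non-empty open subset $U$, which is all the dimension count needs. Your set $U$ is cut out by a generic criterion: $\operatorname{Re}M$ has simple spectrum, and $\operatorname{Im}M$ has non-vanishing off-diagonal entries in its eigenbasis. Your route gives an explicit witness, $\operatorname{diag}(1,\dots,n)+iJ$ with $J$ the all-ones matrix, and a concrete sufficient test for irreducibility. It costs continuity of spectral projections near simple spectrum, whereas the paper's compactness argument needs no perturbation theory and yields openness of the whole irreducible set.

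One small adjustment is needed. You have not shown $\mathcal I\cap\mathcal{CS}_n$ to be open, nor checked that $U$ is semialgebraic, so you should not cite Lemma \ref{lem22} as stated. Either take an open ball inside $U$, which is semialgebraic of dimension $n(n+1)$, and apply Lemma \ref{facts}(1). Or note that $U$ is semialgebraic by Tarski--Seidenberg, since it is defined by the existence of ordered real eigenvalues and real orthonormal eigenvectors of $\operatorname{Re}M$ satisfying your non-vanishing conditions.
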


\begin{proof}
We first show that taking the semialgebraic dimension of \(\mathcal I\cap\mathcal{CS}_n\) is well-defined, by showing that \(\mathcal I\cap\mathcal{CS}_n\) is semialgebraic.

A matrix \(S\in\mathcal{CS}_n\) is reducible if and only if there exists a non-trivial orthogonal projection \(P\) such that \(PS=SP\). The conditions \(P=P^*\), \(P^2=P\), \(PS=SP\), and \(1\leq \operatorname{tr}P\leq n-1\) are polynomial equalities and inequalities in the real and imaginary parts of the entries of \(P\) and \(S\). The inequalities \(1\leq \operatorname{tr}P\leq n-1\) are exactly the condition that \(P\) is non-trivial, since the trace of an orthogonal projection is its rank.

Therefore the set of pairs \((S,P)\) satisfying these conditions is semialgebraic. By Lemma \ref{facts}(4) its projection onto the \(S\)-coordinate is semialgebraic. Hence the set of reducible matrices in \(\mathcal{CS}_n\) is semialgebraic. Therefore its complement, \(\mathcal I\cap\mathcal{CS}_n\), is also semialgebraic.

We next show that the reducible matrices in \(\mathcal{CS}_n\) form a closed set, where \(\mathcal{CS}_n\) is equipped with the Euclidean topology coming from the identification of $\mathcal{CS}_n$ with $\mathbb{R}^{n(n+1)}$. Suppose \(S_k\to S\) in \(\mathcal{CS}_n\), and each \(S_k\) is reducible. Then for each \(k\), there is a non-trivial orthogonal projection \(P_k\) such that \(P_kS_k=S_kP_k\). Passing to a subsequence, we may assume the \(P_k\) all have the same rank. The set $$
\{P:P=P^*=P^2,\ 1\leq \operatorname{tr}P\leq n-1\}
$$ is compact in finite dimensions, so after passing to a subsequence, \(P_k\to P\) for some non-trivial orthogonal projection \(P\). Taking limits in \(P_kS_k=S_kP_k\), we get \(PS=SP\). Thus \(S\) is reducible. 

The set \(\mathcal I\cap\mathcal{CS}_n\) is clearly non-empty. Hence we have shown \(\mathcal I\cap\mathcal{CS}_n\) is a non-empty open semialgebraic subset of \(\mathcal{CS}_n\). Since \(\dim_{\mathbb R}\mathcal{CS}_n=n(n+1)\) lemma \ref{lem22} yields
\[
\dim_{\mathbb R}(\mathcal I\cap\mathcal{CS}_n)=n(n+1).
\]
\end{proof}

\begin{proof}[Proof of Theorem \ref{mainthm}]
By Proposition \ref{main1}, the set \(\mathcal C\) is semialgebraic. Hence
Theorem \ref{mainfinal} implies that \(\mathcal U(\mathcal C)\) is semialgebraic.
By Theorem \ref{main2}, the set \(\mathcal I\cap\mathcal{CS}_n\) is also
semialgebraic. It remains to prove the dimension inequality.

Proposition \ref{main1} and Theorem \ref{mainfinal} show that
$$
\dim_{\mathbb{R}} \mathcal{U} ( \mathcal{C}) \leq 7n - 6 + \frac{n(n-1)}{2}.
$$
As $n \geq 10$, we have $7n - 6 + \frac{n(n-1)}{2} < n(n+1)$ and so combining this fact with the above estimate and the previous theorem yields
$$
\dim_{\mathbb{R}} \mathcal{U} ( \mathcal{C}) \leq 7n - 6 + \frac{n(n-1)}{2} < n(n+1) = \dim_{\mathbb R}(\mathcal I\cap\mathcal{CS}_n).
$$
\end{proof}

\section{A complex-orthogonal truncated Toeplitz model}\label{new5}

In \cite{garciaopen}, Garcia asked whether truncated Toeplitz operators
could provide a model theory for complex symmetric operators.
Corollary~4.4 shows that unitary equivalence, which preserves the usual
Hilbert-space inner product, is too restrictive for a universal
finite-dimensional truncated Toeplitz model. We instead use
complex-orthogonal equivalence, which preserves the symmetric bilinear
form induced by a conjugation. Theorem~\ref{answergarcia} shows that every
finite-dimensional complex symmetric operator has a coanalytic truncated
Toeplitz model under this notion of equivalence, giving a modest positive
contribution towards Garcia's question.

If $\mathcal{H}$ is a Hilbert space, then we say that $C$ is a conjugation operator on $\mathcal{H}$ if the following conditions hold:
\newline 
(a) $C$ is antilinear:
$$
C\left(a_{1} f_{1}+a_{2} f_{2}\right)=\overline{a_{1}} C f_{1}+\overline{a_{2}} C f_{2}
$$
for all $a_{1}, a_{2} \in \mathbb{C}$ and $f_{1}, f_{2} \in \mathcal{H}$.\newline
(b) $C$ is isometric:
\begin{equation}\label{b}
\langle C f, C g\rangle=\langle g, f\rangle
\end{equation}
for all $f, g \in \mathcal{H}$.
\newline
(c) $C$ is involutive: \begin{equation}\label{c}
C^{2}=I .
\end{equation}
\newline An operator $F$ on a Hilbert space $\mathcal{H}$ is said to be \textit{$C$-symmetric} if $CFC = F^*$, and an operator is \textit{complex symmetric} if it is $C$-symmetric with respect to some conjugation map $C$. Associated with each conjugation $C$ is the symmetric bilinear form
\[
        [x,y]_C:=\langle x,Cy\rangle,
        \qquad x,y\in\mathcal H.
\]

Let $\mathcal H_1$ and $\mathcal H_2$ carry conjugations $C_1$ and $C_2$, respectively. For $X\colon\mathcal H_1\to\mathcal H_2$, define its $(C_1,C_2)$-transpose by \[ X^t=C_1X^*C_2. \] Thus \[ [Xx,y]_{C_2}=[x,X^ty]_{C_1}. \] We call an invertible map $X\colon\mathcal H_1\to\mathcal H_2$ $(C_1,C_2)$-orthogonal if \[ [Xx,Xy]_{C_2}=[x,y]_{C_1} \] for all $x,y\in\mathcal H_1$, and say that operators $T_j$ on $(\mathcal H_j,C_j)$ are complex-orthogonally equivalent if $XT_1=T_2X$ for such a map $X$. This is the natural two-space extension of the standard notion of a $C$-orthogonal operator \cite{garciaapp}.

A vector $x\in\mathcal H$ is called $C$-real if $Cx=x$. Every
finite-dimensional Hilbert space with conjugation $C$ admits an
orthonormal basis of $C$-real vectors (see Lemma~2.6 in \cite{CCOgarcia}),
and such a basis is called a $C$-real orthonormal basis. If $X$ is
$(C_1,C_2)$-orthogonal, then, with respect to $C_1$-real and $C_2$-real
orthonormal bases, its matrix $Q$ satisfies $Q^TQ=I$. Thus, on
$\mathbb C^n$ with standard coordinate conjugation
$C_0z=\overline z$, this is precisely the usual notion of
complex-orthogonal equivalence. For a finite Blaschke product $B$, the model space $K_B$ carries the canonical conjugation \[ (C_Bf)(\zeta)=B(\zeta)\overline{\zeta f(\zeta)}, \qquad \zeta\in\mathbb T, \] and it is known that every truncated Toeplitz operator on $K_B$ is $C_B$-symmetric \cite{sarason2007algebraic}. 

\begin{thm}\label{answergarcia} Let $T$ be a $C$-symmetric operator on an $n$-dimensional Hilbert space $\mathcal H$. Then there exist a finite Blaschke product $B$ of degree $n$, a function $\varphi\in H^\infty$, and a $(C,C_B)$-orthogonal isomorphism $O\colon\mathcal H\to K_B$ such that \[ OTO^{-1}=A_{\overline{\varphi}}. \] \end{thm}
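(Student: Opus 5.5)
The plan is to reduce the statement to a matrix problem and then build the model via the functional calculus of the compressed shift. Fix a $C$-real orthonormal basis of $\mathcal H$ (Lemma~2.6 of \cite{CCOgarcia}). In this basis $T$ is represented by a matrix $S\in\mathcal{CS}_n$, since $CTC=T^*$ translates into $S^T=S$. Similarly, for any finite Blaschke product $B$ and any $\varphi\in H^\infty$, the TTO $A_{\overline\varphi}$ is $C_B$-symmetric. So in a $C_B$-real orthonormal basis of $K_B$ it is represented by a matrix $S'\in\mathcal{CS}_n$.

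As observed before the theorem, a $(C,C_B)$-orthogonal map corresponds, in these bases, to a matrix $Q$ with $Q^TQ=I$. Hence it suffices to find $B$ and $\varphi$ such that $S$ and $S'$ are complex-orthogonally similar. I will do this in two steps:
\begin{enumerate}
\item find $B,\varphi$ with $A_{\overline\varphi}$ similar to $T$;
\item show that two similar complex symmetric matrices are complex-orthogonally similar.
\end{enumerate}

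For step 2 I would mimic the proof of Lemma~\ref{3.2}. Suppose $S'=XSX^{-1}$ with $S,S'$ symmetric. Transposing gives $S'=X^{-T}SX^{T}$, so $G=X^TX$ commutes with $S$. Since $G$ is invertible, Lagrange--Hermite interpolation on its spectrum gives a polynomial $p$ with $p(G)^2=G$. Then $W=p(G)$ is symmetric, invertible, and commutes with $S$. Set $Q=XW^{-1}$. One checks that $Q^TQ=W^{-1}GW^{-1}=I$ and $QSQ^{-1}=XSX^{-1}=S'$. This is the classical fact (Horn--Johnson) and is routine.

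For step 1, note that $A_{\overline\varphi}=A_\varphi^*=\varphi(S_B)^*$, where $S_B=A_z$ is the compressed shift on $K_B$. It is therefore enough to make $\varphi(S_B)$ similar to $T^*$. Equivalently, $\varphi(S_B)$ should have the Jordan form of $T^T$, which is the conjugated Jordan form of $T$; this is fine because similarity to $T^*$ is all that is needed.

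The key input is the structure of $S_B$. If $B$ has distinct zeros $a_1,\dots,a_k$ with multiplicities $s_1,\dots,s_k$, then $S_B$ is cyclic with minimal polynomial $\prod_j(z-a_j)^{s_j}$. Hence $S_B$ is similar to $\bigoplus_j J_{s_j}(a_j)$.

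Now let the Jordan decomposition of $T^*$ be $\bigoplus_{j=1}^k J_{s_j}(\mu_j)$, where the eigenvalues $\mu_j$ may repeat. I choose distinct points $a_1,\dots,a_k\in\mathbb D$, one for each Jordan block, and let $B$ have zero $a_j$ with multiplicity $s_j$, so that $\deg B=\sum_j s_j=n$. By Hermite interpolation I choose a polynomial $\varphi$ with
\[
\varphi(a_j)=\mu_j,\qquad \varphi'(a_j)=1,\qquad \varphi^{(r)}(a_j)=0\quad(2\le r<s_j).
\]
Then $\varphi(J_{s_j}(a_j))=\mu_jI+N_j$, where $N_j$ is the standard nilpotent shift of size $s_j$. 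Thus $\varphi(S_B)\sim\bigoplus_j J_{s_j}(\mu_j)\sim T^*$, and taking adjoints gives $A_{\overline\varphi}\sim T$.

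Combining this with step 2 and transporting back through the real orthonormal bases yields the required $(C,C_B)$-orthogonal $O$ with $OTO^{-1}=A_{\overline\varphi}$.

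The main obstacle I expect is justifying the Jordan structure of $S_B$ for repeated zeros. This comes from cyclicity: the constant $1$ is not a reproducing kernel in general, but a kernel vector is cyclic for $S_B^*$. Together with the fact that the minimal polynomial of $S_B$ is determined by $B$, this shows that each eigenvalue carries a single Jordan block of full multiplicity. I would cite Sarason's work, or argue via the model-theory fact that $p(S_B)=0$ if and only if $B$ divides $p$.
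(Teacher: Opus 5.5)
Your proof is correct, and its skeleton matches the paper's. You obtain a similarity $YT=A_{\overline\varphi}Y$, then replace $Y$ by $O=YW^{-1}$, where $W$ is a polynomial square root of the symmetric, invertible operator $G=Y^tY$, which commutes with $T$. Your Step~2 is exactly this normalization, the same device as in Lemma~\ref{3.2}. You carry it out with matrices in $C$-real and $C_B$-real orthonormal bases rather than with the abstract $(C,C_B)$-transpose; the two formulations are equivalent by the remark preceding the theorem.

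The genuine difference is Step~1. The paper imports the similarity of $T$ to a coanalytic TTO from \cite[Theorem~6.1]{TTOUEsimilarity}, whereas you prove it directly. You use that $S_B$ is nonderogatory with Jordan form $\bigoplus_j J_{s_j}(a_j)$. You then choose a Hermite-interpolating polynomial $\varphi$ with $\varphi(J_{s_j}(a_j))$ similar to $J_{s_j}(\mu_j)$, so that distinct eigenvalues of $S_B$ can be sent to a common eigenvalue of $T^*$. The paper's route is shorter. Yours is self-contained and makes the model explicit: $\varphi$ is a polynomial, and the zeros of $B$ can be any distinct points of $\mathbb D$, one per Jordan block of $T^*$, with multiplicities equal to the block sizes.

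The point you flag as the main obstacle needs no separate cyclicity argument. For a polynomial $p$ one has $p(S_B)=A_p$, and $A_p=0$ applied to $P_B1=1-\overline{B(0)}B$ gives $p\,(1-\overline{B(0)}B)\in BH^2$, hence $p\in BH^2$. So the minimal polynomial of $S_B$ is $\prod_j(z-a_j)^{s_j}$, of degree $n=\dim K_B$, which already forces one Jordan block per eigenvalue.

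One harmless slip: a matrix and its transpose have the same Jordan form, so the conjugated Jordan form belongs to $T^*$, not to $T^T$. Since you then work directly with the Jordan decomposition of $T^*$, nothing downstream is affected.
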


\begin{proof} By \cite[Theorem~6.1]{TTOUEsimilarity}, there exist a finite Blaschke product $B$ of degree $n$, a coanalytic truncated Toeplitz operator $A=A_{\overline{\varphi}}$ on $K_B$, and an invertible map $Y\colon\mathcal H\to K_B$ such that \[ YT=AY. \] Since $T^t=T$ and $A^t=A$, taking transposes gives \[ TY^t=Y^tA. \] Set
\[
        G=Y^tY.
\]
Then $G$ is invertible, $G^t=G$, and
\[
        GT=Y^tYT=Y^tAY=TY^tY=TG.
\]
We now use the same square-root normalization as in the proof of
Lemma~3.2, with the transpose determined by $C$ and $C_B$. By Hermite
interpolation, there is a polynomial $p$ such that $W=p(G)$ satisfies
\[
        W^2=G.
\]
Since $G^t=G$ and $GT=TG$, we also have
\[
        W^t=W,\qquad WT=TW.
\]
Define $O=YW^{-1}$. Then
\[
        O^tO=W^{-1}Y^tYW^{-1}=W^{-1}GW^{-1}=I,
\]
so $O$ is $(C,C_B)$-orthogonal, and
\[
        OT=YW^{-1}T=YTW^{-1}=AYW^{-1}=AO.
\] \end{proof}

\section{Conjugation-invariant matrix representations of truncated Toeplitz operators}\label{5}

The following conjecture was posed in \cite{o2023symmetric} after a related
question involving modified Clark bases was shown to have a negative answer.

\begin{conj}\label{newconj}
Suppose that \(M\) is a complex symmetric matrix. If \(M\) is unitarily equivalent
to a TTO does there exist an inner function \(B\), and a \(C_B\)-real orthonormal basis for the
model space \(K_B\), such that \(M\) is the matrix representation of a TTO on
\(K_B\) with respect to this basis? In other words, do all such unitary
equivalences between complex symmetric matrices and TTOs arise from \(C_B\)-real
matrix representations?
\end{conj}
\noindent In this section we prove this conjecture in the finite-dimensional setting.

% We finish by recording a consequence of Lemma \ref{3.2}.

Recall that, for a finite Blaschke product \(B\), the model space \(K_B\) carries the canonical
conjugation
\[
(C_B f)(\zeta)=B(\zeta)\overline{\zeta f(\zeta)},\qquad \zeta\in\mathbb T.
\]
The following theorem proves Conjecture \ref{newconj}.

\begin{thm}
Let \(B\) be a finite Blaschke product of degree \(n\), let \(A\) be a truncated
Toeplitz operator on \(K_B\), and let \(M\in\mathcal{CS}_n\). If \(M\) is
unitarily equivalent to \(A\), then there exists a \(C_B\)-real orthonormal basis
\(v_1,\ldots,v_n\) of \(K_B\) such that
\[
M=[A]_{v_1,\ldots,v_n}.
\]
\end{thm}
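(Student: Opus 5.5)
Choose any \(C_B\)-real orthonormal basis \(e_1,\ldots,e_n\) of \(K_B\); such a basis exists by Lemma~2.6 of \cite{CCOgarcia}. Let \(E\colon\mathbb C^n\to K_B\) be the unitary sending the standard basis to \((e_k)\). Since \(C_Be_k=e_k\), we have \(E\overline{z}=C_BEz\) for all \(z\in\mathbb C^n\). Set \(M_0=[A]_{e_1,\ldots,e_n}=E^*AE\). Because \(A\) is \(C_B\)-symmetric, \(M_0\) is symmetric: its entries are \(\langle Ae_j,e_i\rangle\), and conjugation-invariance together with \(C_BAC_B=A^*\) and \eqref{b} gives \(\langle Ae_j,e_i\rangle=\langle Ae_i,e_j\rangle\). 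Thus \(M_0\in\mathcal{CS}_n\).

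Next I compare \(M\) and \(M_0\). By hypothesis \(M\) is unitarily equivalent to \(A\), hence to \(M_0\). Both matrices lie in \(\mathcal{CS}_n\), so Lemma~\ref{3.2} supplies \(R\in O(n,\mathbb R)\) with
\[
M=R^TM_0R .
\]

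Finally, define \(v_j=\sum_k R_{kj}e_k\), that is, \(v_j=ERe_j\) in terms of the standard basis. Since \(R\) is real orthogonal and \(E\) is unitary, the \(v_j\) form an orthonormal basis. Since the coefficients \(R_{kj}\) are real and each \(e_k\) is \(C_B\)-real, antilinearity gives \(C_Bv_j=\sum_k R_{kj}C_Be_k=v_j\). The representing matrix is
\[
[A]_{v_1,\ldots,v_n}=(ER)^*A(ER)=R^*M_0R=R^TM_0R=M,
\]
which is the required representation.

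The one substantive step is the reduction from unitary to real orthogonal equivalence, and Lemma~\ref{3.2} already handles it. The remaining checks are routine: that \(M_0\) is symmetric, that the \(v_j\) are \(C_B\)-real, and that the change-of-basis computation is correct.
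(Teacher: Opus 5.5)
Your proposal is correct and follows the paper's proof essentially step for step. Like the paper, you fix a $C_B$-real orthonormal basis, observe that the resulting matrix (your $M_0$) is symmetric, invoke Lemma~\ref{3.2} to obtain $R\in O(n,\mathbb R)$ with $M=R^TM_0R$, and set $v_j=\sum_k R_{kj}e_k$. Your explicit checks of the symmetry of $M_0$ via \eqref{b} and of the change-of-basis identity $(ER)^*A(ER)=R^TM_0R$ fill in details that the paper only states.
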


\begin{proof}
Choose a \(C_B\)-real orthonormal basis \(b_1,\ldots,b_n\) of \(K_B\), and set
\[
S=[A]_{b_1,\ldots,b_n}.
\]
Since \(A\) is \(C_B\)-symmetric and \(b_1,\ldots,b_n\) is a \(C_B\)-real basis, we have \(S \in\mathcal{CS}_n\).

By assumption, \(M\) is unitarily equivalent to \(A\). Since \(S\) is a matrix
representation of \(A\), it follows that \(M\) and \(S\) are unitarily equivalent
symmetric matrices. By Lemma \ref{3.2}, there exists \(R=(r_{ij})\in O(n,\mathbb R)\)
such that
\[
M=R^TSR.
\]
Define
\[
v_j=\sum_{i=1}^n r_{ij}b_i,\qquad 1\leq j\leq n.
\]
Because \(R\) is real orthogonal, \(v_1,\ldots,v_n\) is an orthonormal basis.
Moreover, since the coefficients \(r_{ij}\) are real and each \(b_i\) is
\(C_B\)-real, each \(v_j\) is also \(C_B\)-real. Finally, the change-of-basis
formula gives
\[
[A]_{v_1,\ldots,v_n}=R^TSR=M.
\]
\end{proof}

Comparing the theorem above to Corollary \ref{counterex} shows that while not every symmetric matrix is unitarily equivalent to a direct sum of
TTOs, any unitary equivalence which does exist between a symmetric matrix and a truncated Toeplitz operator can be realised as
a \(C_B\)-real matrix representation.

\newpage 

\newpage

\newpage 

\bibliographystyle{plain}
\bibliography{bibliography.bib}

\end{document}